\newtheorem{theorem}{Theorem}
\newtheorem*{theorem*}{Theorem}
\newtheorem{lemma}[theorem]{Lemma}
\newtheorem{proposition}[theorem]{Proposition}
\newtheorem{claim}[theorem]{Claim}
\newtheorem{corollary}[theorem]{Corollary}
\newtheorem{maintheorem}{Theorem}
\theoremstyle{definition}
\newtheorem{definition}[theorem]{Definition}
\newtheorem*{definition*}{Definition}
\newtheorem*{lemma*}{Lemma}
\numberwithin{equation}{section}
\numberwithin{theorem}{section}
\newcommand{\N}{\mathbb{N}}
\newcommand{\eps}{\varepsilon}
\DeclareDocumentCommand\Pr{ m g }{%
    {   \IfNoValueTF {#2}
      {\mathbb{P}\left[{#1}\right]}
      {\mathbb{P}\left[{#1}\middle\vert{#2}\right]}%
    }
}
\DeclareDocumentCommand\E{ m g }{%
    {   \IfNoValueTF {#2}
      {\mathbb{E}\left[{#1}\right]}
      {\mathbb{E}\left[{#1}\middle\vert{#2}\right]}%
    }
}
\def\cc{\curvearrowright}
\begin{document}

\title[]{Strong amenability and the infinite conjugacy class property}

\author[]{Joshua Frisch}
\author[]{Omer Tamuz}
\author[]{Pooya Vahidi Ferdowsi}
%    Address of record for the research reported here
\address{California Institute of Technology}
\thanks{This work was supported by  a grant from the Simons Foundation (\#419427,  Omer Tamuz)}

%    Information for third author

%\thanks{}
\date{\today}

\begin{abstract}
  A group is said to be strongly amenable if each of its proximal
  topological actions has a fixed point. We show that a finitely
  generated group is strongly amenable if and only if it is virtually
  nilpotent. More generally, a countable discrete group is strongly
  amenable if and only if none of its quotients have the infinite
  conjugacy class property.
\end{abstract}

\maketitle
%\tableofcontents

\section{Introduction}

Let $G \cc X$ be a continuous action of a countable discrete group on a compact Hausdorff space. This action is said to be {\em proximal} if for any $x,y \in X$ there exists a net $\{g_i\}$ in $G$ such that $\lim_i g_i x=\lim_i g_i y$. $G$ is said to be {\em strongly amenable} if every such proximal action of $G$ has a fixed point. Glasner introduced these notions in~\cite{glasner1976proximal}, and proved a number of results: he showed that every virtually nilpotent group is strongly amenable, and that non-amenable groups are not strongly amenable. He also gave some examples of amenable groups that are not strongly amenable.\footnote{Glasner attributes one of these examples to Furstenberg.} Since then, a number of papers have studied strong amenability~\cites{glasner1983proximal, glasner2002minimal, melleray2015polish, hartman2016thompson, dai2017universal}, but none have made significant progress on relating it to other group properties.

Our main result is a characterization of strongly amenable groups. Recall that a group has the infinite conjugacy class property (ICC) if each of its non-trivial elements has an infinite conjugacy class.
\begin{maintheorem}
 \label{thm:main}
   A countable discrete group is strongly amenable if and only if it
   has no ICC quotients. In particular, a finitely generated group is
   strongly amenable if and only if it is virtually nilpotent.
\end{maintheorem}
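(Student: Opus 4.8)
The plan is to work with proximal flows directly: $G$ is strongly amenable exactly when every minimal proximal $G$-flow is a single point, since any proximal flow contains a minimal subflow and a minimal subflow consisting of one point is a global fixed point. I will use two routine remarks without comment: strong amenability passes to quotients (a proximal $G/N$-flow is a proximal $G$-flow with the same fixed points), and a proximal flow of a quotient pulls back to one of $G$. I will also use the \emph{FC-center} $Z_{FC}(G)$, the characteristic subgroup of elements with finite conjugacy class, together with the elementary observation (transfinite induction) that $G$ has no ICC quotient if and only if the upper FC-central series $1=Z_0\le Z_1\le\cdots$, with $Z_{\alpha+1}/Z_\alpha = Z_{FC}(G/Z_\alpha)$ and unions at limits, eventually reaches $G$: if $G$ has no ICC quotient then each $G/Z_\alpha$ has non-trivial FC-center, so the series strictly increases until it fills $G$; conversely if it fills $G$ then every quotient inherits a non-trivial FC-center. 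Granting the main equivalence, the ``in particular'' clause is then immediate from the group-theoretic fact that a finitely generated group has no ICC quotient if and only if it is virtually nilpotent, together with Glasner's theorem that virtually nilpotent groups are strongly amenable (so the two statements about finitely generated groups are consistent, and one reduces to the other).

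For the implication ``no ICC quotient $\Rightarrow$ strongly amenable'', the core is the following lemma: \emph{if $G\cc X$ is a minimal proximal flow and $g\in G$ has finite conjugacy class, then $g$ acts trivially on $X$.} Let $\mathcal{C}=\{g_1,\dots,g_k\}$ be the conjugacy class of $g$ and set $F_i=\mathrm{Fix}(g_i)$. Then $\bigcup_i F_i$ is closed and $G$-invariant (conjugation permutes the $F_i$ transitively), and it is non-empty: given any $x$ and a net $h_\alpha$ with $h_\alpha x$ and $h_\alpha g x=(h_\alpha g h_\alpha^{-1})h_\alpha x$ converging to a common $w$, pass to a subnet on which $h_\alpha g h_\alpha^{-1}$ equals a fixed conjugate $g_j$; by continuity $g_j w=w$. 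So by minimality $\bigcup_i F_i = X$, hence $I(x):=\{i: g_ix=x\}$ is non-empty for all $x$. Now $|I(\cdot)|$ is constant on $G$-orbits (conjugation just relabels indices) and $\{x:|I(x)|\ge m\}$ is closed and $G$-invariant, so by minimality $|I(\cdot)|$ is a constant $m_0\ge 1$; consequently each non-empty set $\{x:I(x)=S\}$ with $|S|=m_0$ equals both $\bigcap_{i\in S}F_i$ and the complement of a finite union of such intersections, hence is clopen, and $G$ permutes these finitely many pieces. Collecting one $G$-orbit of pieces gives a non-empty clopen $G$-invariant set, so it is all of $X$; thus $G$ acts transitively on the non-empty pieces, and this finite transitive $G$-set, being a factor of the proximal flow $X$ (the map $x\mapsto$ its piece is continuous), is itself proximal, hence a single point (a net contracting two points of a finite space is eventually constant, forcing them equal). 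So one piece is all of $X$, and the corresponding $S_0$ is invariant under the transitive conjugation action on $\mathcal{C}$, whence $S_0=\mathcal{C}$: every $g_i$, in particular $g$, fixes every point. Given the lemma, let $G\cc X$ be any minimal proximal flow: the lemma shows $Z_{FC}(G)$ acts trivially, so the action factors through $G/Z_{FC}(G)$, where the lemma applies again, and transfinite induction (unions at limits) shows the whole FC-central series acts trivially; since $G$ has no ICC quotient this series exhausts $G$, so $X$ is a point, and $G$ is strongly amenable.

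For the converse, ``$G$ has an ICC quotient $\Rightarrow$ $G$ is not strongly amenable'', by the pull-back remark it suffices to show that every non-trivial ICC group $H$ admits a non-trivial minimal proximal flow. This is where a genuine construction is required: one must produce, using only the hypothesis that every non-trivial element of $H$ has infinite conjugacy class, a compact Hausdorff space on which $H$ acts proximally with no global fixed point, and then pass to a minimal subflow. I expect this to be the main obstacle. The most naive attempt fails: the conjugation action of $H$ on $\{0,1\}^H$, say the orbit closure of the indicator of a single non-identity element, is indeed proximal, but the empty set is a global fixed point lying in that orbit closure (and passing to complements only moves the problem to the full set). So one needs an ICC-tailored construction of a ``boundary-like'' flow engineered to avoid such trivial fixed points, and making that work is the heart of this direction.
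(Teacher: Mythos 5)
Your proof of the direction ``no ICC quotient $\Rightarrow$ strongly amenable'' is correct, and it is a genuinely self-contained alternative to the paper's argument: the paper simply quotes Glasner's lemmas that a finite-index subgroup of $G$ acts minimally and proximally on any minimal proximal $G$-flow and that central elements act trivially on such flows, whereas you reprove the key fact (finite-conjugacy-class elements act trivially on minimal proximal flows) directly, via the clopen partition by fixed-point patterns $I(x)$ and the observation that a finite proximal factor is a point. That argument is sound, and the transfinite induction up the FC-central series matches the paper's use of hyper-FC-centrality.

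However, the converse --- every (non-trivial) ICC group admits a proximal flow with no fixed point --- is the actual content of the theorem, and you have not proved it; you explicitly flag it as ``the main obstacle'' and stop after observing that the naive orbit-closure in $\{0,1\}^G$ acquires the constant configurations as fixed points. This is precisely where all of the paper's work lies. The paper's route is a Baire-category argument on the space $\mathcal{S}$ of limits of strongly irreducible subshifts (with the singleton shifts removed): it shows that $\eps$-proximality and $\eps$-minimality are open conditions, and that they are dense in $\mathcal{S}$, so that a generic shift in $\mathcal{S}$ is minimal, proximal, and non-trivial. Density rests on constructing, for each finite symmetric $X\subseteq G$, a strongly irreducible ``$X$-witness'' subshift of $\{0,1\}^G$ --- one in which the $1$'s of every configuration are $X$-separated, yet any two configurations share a common $1$ --- by a probabilistic construction (declare $u(a)=1$ when an i.i.d.\ label at $a$ beats all labels in $aX$) combined with a packing/patching argument to upgrade a single good finite pattern to a strongly irreducible shift. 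The ICC hypothesis enters exactly once, to guarantee, via B.~H.~Neumann's lemma on covering a group by finitely many cosets of infinite-index subgroups (the centralizers of elements of $X^2$), the existence of ``switching'' elements that make the relevant events independent. Without some construction of this kind your argument establishes only one implication of the stated equivalence.
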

For example, this implies that the group $S_\infty$ of finite permutations of $\N$ is not strongly amenable. Likewise, the alternating subgroup of $S_\infty$ is not strongly amenable, as is every infinite simple group. 

Groups that have no ICC quotients are also known as {\em
  hyper-FC-central}~\cite{mclain1956remarks} or {\em
  hyper-FC}~\cite{duguid1960class}. By Theorem~\ref{thm:main}, these are exactly the strongly amenable groups. This implies that subgroups of
strongly amenable countable discrete groups are again strongly
amenable. A finitely generated group is hyper-FC if and only if it is
virtually
nilpotent~\cite{mclain1956remarks,duguid1956fc}\footnote{For a
  self-contained proof see~\cite{frisch2018virtually}.}, from
which the second part of the theorem follows.

The case of groups with no ICC quotients is a straightforward consequence of Glasner's work. 
To prove that groups with ICC quotients are not strongly amenable, we consider an ICC group $G$, and a certain class of symbolic dynamical systems for $G$. Using a topological genericity argument, we show that in this class there is a proximal action without a fixed point.

In light of Theorem~\ref{thm:main}, it is natural to ask whether, in some larger class of topological groups, a group is strongly amenable if and only if each of its quotients has a non-trivial element with a compact (or perhaps precompact) conjugacy class.

\subsection*{The hyper-FC-subgroup, the universal minimal proximal action, and the group von Neumann algebra}

One can define the {\em FC-series} of a group $G$ as
$$1\leq F_1\leq F_2 \leq \cdots \leq F_\alpha \leq \cdots \leq G,$$
where $F_{\alpha+1}/F_\alpha$ is the normal subgroup of $G/F_\alpha$ consisting of the elements of the finite conjugacy classes, and $F_\beta = \cup_{\alpha<\beta} F_\alpha$ for $\beta$ a limit ordinal~\cites{haimo1953FC, mclain1956remarks}. The group $\cup_\alpha F_\alpha$ is called the {\em hyper-FC-subgroup} of $G$.

In ~\cite[Section II.4]{glasner1976proximal} Glasner defines the universal minimal proximal action of a group $G$; this is the unique minimal proximal action of $G$ which has every minimal proximal action as a factor. We denote this action by $G\cc \partial_p G$. In Proposition~\ref{prop:faithful} we show that every ICC group has a minimal proximal faithful action. On the other hand, the proof of Proposition~\ref{prop:non-icc} shows that the hyper-FC-subgroup of $G$ acts trivially on $\partial_p G$. Combining these gives us:
\begin{corollary}
\label{cor:kernel}
  For a countable discrete group $G$, $\ker(G\cc \partial_p G)$ is equal to the hyper-FC-subgroup of $G$.
\end{corollary}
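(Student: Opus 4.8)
The plan is to sandwich $K := \ker(G \cc \partial_p G)$ between the strongly amenable radical and itself. Write $N$ for the strongly amenable radical of $G$, which by the discussion above coincides with the hyper-FC-subgroup $\cup_\alpha F_\alpha$, and note that $K$ is normal in $G$, being a kernel. One inclusion, $N \subseteq K$, is already in hand: as observed above, the proof of Proposition~\ref{prop:non-icc} shows that the strongly amenable radical acts trivially on $\partial_p G$.

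For the reverse inclusion $K \subseteq N$, I would first pass to the quotient $\bar G := G/N$ and observe that it has trivial FC-center. Indeed, $N = F_\alpha$ is the stage at which the FC-series stabilizes, so $F_{\alpha+1}/F_\alpha$ — which by definition is the FC-center of $\bar G$ — is trivial; hence every non-trivial element of $\bar G$ has an infinite conjugacy class, i.e. $\bar G$ is either trivial or ICC. If $\bar G$ is trivial then $G = N$ and $K \subseteq G = N$ is immediate, so assume $\bar G$ is ICC and apply Proposition~\ref{prop:faithful} to get a faithful minimal proximal action $\bar G \cc Y$. Composing with the quotient map $G \to \bar G$ turns $Y$ into a minimal proximal $G$-space (orbits and proximal nets lift), so by the universal property of $G \cc \partial_p G$ there is a $G$-equivariant continuous surjection $\partial_p G \to Y$.

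Since $K$ acts trivially on $\partial_p G$, it acts trivially on the factor $Y$; but $K$ acts on $Y$ only through its image $\bar K$ in $\bar G$, and the $\bar G$-action on $Y$ is faithful, so $\bar K$ is trivial, i.e. $K \subseteq N$. Together with the first inclusion this yields $K = N$. The only points requiring care are bookkeeping ones — confirming that the hyper-FC-subgroup is exactly the stabilization point of the FC-series (so that $\bar G$ genuinely has trivial FC-center, hence is trivial or ICC), and tracking faithfulness through the pullback along $G \to \bar G$ so that "faithful as a $\bar G$-action'' becomes the statement that the $G$-kernel of $Y$ is precisely $N$, which is what licenses the conclusion $K \subseteq N$. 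I do not expect a substantive obstacle beyond this; the real work is already carried by Propositions~\ref{prop:faithful} and~\ref{prop:non-icc}.
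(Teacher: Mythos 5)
Your proof is correct and is essentially the paper's intended argument: the paper simply states that the corollary follows by combining the triviality of the action of the strongly amenable radical on $\partial_p G$ (from the proof of Proposition~\ref{prop:non-icc}) with the existence of a faithful minimal proximal action of any ICC group (Proposition~\ref{prop:faithful}), and your write-up carries out exactly that combination, correctly supplying the bookkeeping (that $G/N$ is trivial or ICC, and the pullback/factor argument through the universal property).
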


Glasner also defines the universal minimal {\em strongly proximal} action of a group $G$, which is the unique minimal strongly proximal  action of $G$ which has every minimal strongly proximal action as a factor\footnote{A topological action $G \cc X$ is {\em strongly proximal} if for each Borel probability measure $\mu$ on $X$ there exists a net $\{g_i\}$ such that $\lim_i g_i\mu$ is a point mass.}. We denote this action by $G \cc \partial_s G$. Furman~\cite[Proposition 7]{furman2003minimal} shows that the kernel of $G \cc \partial_s G$ is the amenable radical of $G$.

It is known that the group von Neumann algebra of a group $G$ has a unique tracial state iff $G$ is ICC, and we show that ICC groups are precisely the groups with faithful universal minimal proximal actions. We thus have the following dynamical characterization of the unique trace property of the group von Neumann algebra:
\begin{corollary}
\label{cor:von Neumann unique trace}
  For a countable discrete group $G$, the following are equivalent:
  \begin{enumerate}
      \item The group von Neumann algebra of $G$ has a unique tracial state.
      \item $G\cc \partial_p G$ is faithful.
  \end{enumerate}
\end{corollary}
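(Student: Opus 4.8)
The plan is to show that each of the two conditions is equivalent to $G$ being ICC, and then chain the two equivalences. For (1), I would invoke the classical dictionary between the unique trace property and the ICC property. Recall the canonical trace $\tau$ on the group von Neumann algebra $L(G)$, given by $\tau(a)=\langle a\delta_e,\delta_e\rangle$. If $G$ is not ICC, choose $g\neq e$ with finite conjugacy class $C$; then $\frac{1}{|C|}\sum_{h\in C}h$ is a non-scalar element of the center of $L(G)$, so $L(G)$ is not a factor, and composing $\tau$ with the center-valued trace against distinct states of the (nontrivial) center produces distinct tracial states. Conversely, if $G$ is ICC then $L(G)$ is a factor, hence has a unique tracial state. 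This reduces Corollary~\ref{cor:von Neumann unique trace} to the assertion that $G\cc\partial_p G$ is faithful if and only if $G$ is ICC.

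For the forward direction of that assertion, suppose $G$ is ICC. By Proposition~\ref{prop:faithful}, $G$ admits a faithful minimal proximal action $G\cc X$. By the universal property of $G\cc\partial_p G$, this action is a factor of $G\cc\partial_p G$; since a group acting faithfully on a factor necessarily acts faithfully on the larger system (if $g$ fixes $\partial_p G$ pointwise it fixes the image in $X$ pointwise, which by surjectivity of the factor map is all of $X$), $G\cc\partial_p G$ is faithful.

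For the reverse direction, suppose $G$ is not ICC. Then the FC-center $F_1$ of $G$, namely the subgroup of elements with finite conjugacy class, is a nontrivial normal subgroup. Since $F_1$ is contained in the hyper-FC-subgroup, which is the strongly amenable radical of $G$, the argument in the proof of Proposition~\ref{prop:non-icc} shows that $F_1$ acts trivially on $\partial_p G$. Hence $\ker(G\cc\partial_p G)\supseteq F_1\neq 1$, so the action is not faithful. (Alternatively, this direction is immediate from Corollary~\ref{cor:kernel}.)

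Once Propositions~\ref{prop:faithful} and~\ref{prop:non-icc} are in hand, this corollary is essentially bookkeeping; the only genuine additional input is the standard equivalence between the unique trace property of $L(G)$ and the ICC property of $G$. I therefore do not expect a real obstacle in this proof — the substantive content lies entirely in the construction underlying Proposition~\ref{prop:faithful}.
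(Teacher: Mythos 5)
Your proof is correct and follows exactly the route the paper intends: the classical equivalence between the unique trace property of $L(G)$ and the ICC property, combined with faithfulness of $G\cc\partial_p G$ for ICC groups (via Proposition~\ref{prop:faithful} and the universal property) and non-faithfulness for non-ICC groups (via Proposition~\ref{prop:non-icc}, i.e.\ Corollary~\ref{cor:kernel}). No issues.
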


Analogously, it has been recently shown by Breuillard, Kalantar, Kennedy, and Ozawa~\cite[Corollary 4.3]{breuillard2017c} that the reduced $\mathrm{C}^*$-algebra of $G$ has a unique tracial state if and only if $G \cc \partial_s G$ is faithful.

Following this analogy raises an interesting question. We know from~\cite[Theorem 1.5]{kalantar2017boundaries} that simplicity of the reduced $\mathrm{C}^*$-algebra of a group $G$ is equivalent to the freeness of $G\cc \partial_s G$. We also know from~\cite[Corollary 4.3]{breuillard2017c} that unique trace property of the reduced $\mathrm{C}^*$-algebra of a group $G$ is equivalent to faithfulness of $G\cc \partial_s G$. On the other hand, for group von Neumann algebras of discrete groups, simplicity and the unique trace property are equivalent. So it is natural to ask whether freeness of $G\cc \partial_p G$ is equivalent to its faithfulness.

\subsection*{Acknowledgments}
We would like to thank Benjamin Weiss and Andrew Zucker for correcting mistakes in earlier drafts of this paper, and to likewise thank an anonymous referee for many corrections and suggestions. We would also like to thank Yair Hartman and Mehrdad Kalantar for drawing our attention to the relation of our results to the unique trace property of group von Neumann algebras.

\section{Overview of the proof of Theorem~\ref{thm:main}}
That a group with no ICC quotients is strongly amenable  follows immediately from 
from the following proposition.
\begin{proposition}
\label{prop:non-icc}
  Let $G$ be a countable discrete group that acts faithfully, minimally and proximally on a compact Hausdorff space $X$. Then each non-trivial element of $G$ has an infinite conjugacy class.
\end{proposition}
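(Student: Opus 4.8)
The plan is to prove the contrapositive: assuming that some non-trivial $g\in G$ has a finite conjugacy class, I will show that $g$ acts trivially on $X$, contradicting faithfulness. Set $C=C_G(g)$; since the conjugacy class of $g$ is finite, $C$ has finite index in $G$, and $g$ lies in the centre of $C$. The first step is a structural fact: for every $C$-minimal closed subset $M\subseteq X$ (a non-empty closed $C$-invariant set minimal for these properties), the diagonal $\Delta_M\subseteq M\times M$ is the \emph{unique} $C$-minimal subset of $M\times M$. Indeed, given any $C$-minimal $L\subseteq M\times M$, pick $(x,y)\in L$; proximality of $G\cc X$ yields a net $(f_i)$ in $G$ with $f_ix$ and $f_iy$ converging to a common point. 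As $[G:C]<\infty$, after passing to a subnet all $f_i$ lie in one coset $tC$, so $t^{-1}f_ix$ and $t^{-1}f_iy$ converge to a common point $w\in M$; then $(w,w)\in\overline{C(x,y)}\subseteq L$, so $L$ meets $\Delta_M$. Since $\Delta_M\cong M$ is $C$-minimal, $L\supseteq\Delta_M$, and minimality of $L$ forces $L=\Delta_M$.

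The second step deduces that $g$ fixes $M$ pointwise. Because $g$ commutes with every element of $C$ and $gM=M$, the map $M\to M\times M$, $x\mapsto(x,gx)$, is $C$-equivariant, so its image is a non-empty closed $C$-invariant subset of $M\times M$ and therefore contains a $C$-minimal subset, which by the first step must be $\Delta_M$. Hence every $(w,w)$ with $w\in M$ has the form $(x,gx)$, giving $gw=w$. Thus $g$ acts trivially on every $C$-minimal subset of $X$.

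It remains to globalize this. Let $D=\bigcap_{h\in G}hCh^{-1}$, a finite-index normal subgroup of $G$. Since $D$ is normal and $X$ is $G$-minimal, the $D$-minimal subsets of $X$ partition $X$ and $G$ permutes them transitively; the setwise stabiliser of one of them contains $D$, hence has finite index, so there are only finitely many, say $P_1,\dots,P_r$. In particular $C$ permutes the $P_i$, and for each $i$ the set $CP_i=\bigcup_{c\in C}cP_i$ is a closed $C$-invariant set which is readily checked to be $C$-minimal and which contains $P_i$. By the second step $g$ fixes each $CP_i$ pointwise, hence fixes $\bigcup_iP_i=X$ pointwise, so $g=e$ by faithfulness --- a contradiction.

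I expect the main difficulties to be the first step --- producing a diagonal point inside $L$ from proximality of the \emph{ambient} $G$-action while working only inside the proper subgroup $C$ --- and the final bookkeeping that upgrades ``$g$ kills every $C$-minimal piece'' to ``$g$ kills $X$'', where passing to the normal core $D$ and using the finiteness of its minimal pieces is what makes the argument close.
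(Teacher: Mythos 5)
Your argument is correct. It follows the same skeleton as the paper's proof --- pass to the centralizer $C$ of $g$, which has finite index since the conjugacy class is finite, and show that the central element $g$ must act trivially in the presence of proximality --- but where the paper simply cites two lemmas of Glasner (that a finite-index subgroup of a group acting minimally and proximally still acts minimally and proximally on $X$, and that central elements act trivially in minimal proximal actions), you reprove the needed facts from scratch, and along a slightly different route. Glasner's Lemma 3.2 asserts that $X$ itself remains $C$-minimal; you avoid proving this stronger statement by instead decomposing $X$ into finitely many minimal pieces for the normal core $D$ and assembling them into finitely many $C$-minimal sets $CP_i$ covering $X$, to each of which your Steps 1 and 2 (the diagonal is the unique $C$-minimal subset of $M\times M$, hence the graph of $g$ must contain the diagonal) apply. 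The individual steps all check out: the coset subnet argument correctly transfers proximality of the $G$-action to the $C$-action; the $C$-equivariance of $x\mapsto(x,gx)$ uses exactly the centrality of $g$ in $C$; the covering of $X$ by finitely many $D$-minimal sets uses normality of $D$ and $G$-minimality of $X$ in the standard way; and the ``readily checked'' $C$-minimality of $CP_i$ does hold, because each piece $P_j$ is $D$-minimal with $D\le C$, so any nonempty closed $C$-invariant subset of $CP_i$ meets some $P_j$, hence contains it, hence contains all of $CP_i$. The cost of your approach is length; the benefit is that it is entirely self-contained and sidesteps the full strength of Glasner's restriction lemma.
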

\begin{proof}
  Let $g$ be a non-trivial element of $G$. Assume by contradiction that $g$ has a finite conjugacy class. Let $H$ be the centralizer of $g$, so that $H$ has finite index in $G$. By~\cite[Lemma 3.2]{glasner1976proximal} $H$ acts proximally and minimally on $X$. Since $g$ is in the center of $H$, it acts trivially on $X$, by~\cite[Lemma 3.3]{glasner1976proximal}. This contradicts the assumption that the action is faithful.
\end{proof}

Thus, to prove Theorem~\ref{thm:main}, we consider any $G$ that is ICC, and prove that it has a proximal action that does not have a fixed point. This is without loss of generality, since if $G$ has a proximal action without a fixed point, then so does any group that has $G$ as a quotient.

\subsection{Existence by genericity}
Our general strategy for the proof of Theorem~\ref{thm:main} is to consider a certain space $\mathcal{S}$ of non-trivial actions of $G$. We show that this space includes a proximal action without a fixed point by showing that, in fact, a {\em generic} action in this space is minimal and proximal. Genericity here is in the Baire category sense.

To define the space $\mathcal{S}$, let $A$ be a finite alphabet of size at least 2. The {\em full shift} $A^G$, equipped with the product topology, is a space on which $G$ acts continuously by left translations. Enumerate elements of $G = \{g_1, g_2, \ldots \}$ and endow $A^G$ with the metric $d(\cdot,\cdot)$ given by $d(s,t)=1/k$ where $k=\inf\{n \,:\, s(g_n)\neq t(g_n)\}$. An element of $A^G$ is called a {\em configuration}.

The closed, $G$-invariant non-empty subsets of $A^G$ are called {\em shifts}. The space of shifts is endowed with the subspace topology of the Hausdorff topology (or Fell topology) on the closed subsets of $A^G$. This topology is also metrizable: take, for example, the metric that assigns to a pair of shifts $S,T \subseteq A^G$ the distance $1/(n+1)$, where $n$ is the largest index such that  $S$ and $T$ agree on $\{g_1,\ldots,g_n\}$; by agreement on a finite $X\subseteq G$ we mean that the restriction of the configurations in $S$ to $X$ is equal to the restrictions of the configurations in $T$ to $X$. Note that for any shift $S\subseteq A^G$, the sets of the form $\{T\subseteq A^G \:|\: T \text{ agrees with } S \text{ on } X\}$ for different finite subsets $X\subseteq G$ form a basis of the neighborhoods for $S$.

We define the space $\mathcal{S}$ to be the closure, in the space of shifts, of the strongly irreducible shifts, with the $|A|$-many trivial (i.e., singleton) shifts removed. Strongly irreducible shifts  are defined as follows:
\begin{definition}
  A shift $S\subseteq A^G$ is said to be strongly irreducible if there exists a finite $X\subseteq G$ including the identity such that for any two subsets $E_1,E_2 \subseteq G$ with $E_1 X \cap E_2 X = \emptyset$ and any two configurations $s_1,s_2\in S$ there is a configuration $s\in S$ such that $s$ restricted to $E_1$ equals $s_1$ restricted to $E_1$, and $s$ restricted  to $E_2$ equals $s_2$ restricted to $E_2$. 
\end{definition}

To show that the proximal actions are generic in $\mathcal{S}$, we define $\eps$-proximal actions; proximal actions will be the actions which are $\eps$-proximal for each $\eps>0$.
\begin{definition}
  An action $G \cc X$ on a compact metric space with metric $d(\cdot,\cdot)$ is $\eps$-proximal if
  for all $x,y \in X$ there exists a $g \in G$ such that
  $d(g x,g y) < \varepsilon$.
\end{definition}

To show that minimal actions are generic in $\mathcal{S}$, we similarly define the notion of $\eps$-minimality.
\begin{definition}
  An action $G \cc X$ on a compact metric space with metric $d(\cdot,\cdot)$ is $\eps$-minimal if for all $x,y\in X$ there exists a $g\in G$ such that $d(g x, y)<\varepsilon$.
\end{definition}

A subset of a topological space is generic (in the Baire category sense) if it contains a dense $G_\delta$. To prove our main result, we show that the proximal actions are a dense $G_\delta$ in $\mathcal S$. The proof of density is the main challenge of this paper, while proving that this subset is a $G_\delta$ is straightforward.
\begin{claim}
  \label{clm:G-delta}
    The set of $\eps$-proximal shifts is an open set in $\mathcal S$. Thus the set of proximal shifts is a $G_\delta$ set in $\mathcal S$.
    
    Similarly, the set of $\eps$-minimal shifts is an open set in $\mathcal{S}$. Thus the set of minimal shifts is a $G_\delta$ set in $\mathcal{S}$.
\end{claim}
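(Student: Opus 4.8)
The plan is to show that $\eps$-proximality and $\eps$-minimality of a shift $S$, though stated as conditions quantifying over all of $S\times S$ and all of $G$, can each be reduced by compactness to a statement about finitely many group elements that reads off the configurations only through their restrictions to a fixed finite window in $G$; such statements are visibly insensitive to the basic neighborhoods of $\mathcal S$, which are themselves cut out by agreement on finite windows.

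Concretely, fix $\eps>0$ and suppose $S\in\mathcal S$ is $\eps$-proximal. For $g\in G$ let $U_g=\{(x,y)\in S\times S : d(gx,gy)<\eps\}$; since $g$ acts as a homeomorphism of $A^G$ and $d$ is continuous, $U_g$ is open in $S\times S$. By hypothesis the $U_g$ cover the compact space $S\times S$, so $S\times S=\bigcup_{h\in F}U_h$ for some finite $F\subseteq G$. Let $N$ be a non-negative integer for which $d(s,t)<\eps$ holds precisely when $s$ and $t$ agree on $\{g_1,\dots,g_N\}$ (e.g.\ $N=\lfloor 1/\eps\rfloor$), and set $X_0=\bigcup_{h\in F}h^{-1}\{g_1,\dots,g_N\}$, a finite subset of $G$. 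I would then check that any shift $T$ agreeing with $S$ on $X_0$ is again $\eps$-proximal: given $x',y'\in T$, their restrictions to $X_0$ agree with the restrictions to $X_0$ of some $x,y\in S$; picking $h\in F$ with $(x,y)\in U_h$ is the same as saying $x$ and $y$ agree on $h^{-1}\{g_1,\dots,g_N\}\subseteq X_0$, hence $x'$ and $y'$ agree there too, so $hx'$ and $hy'$ agree on $\{g_1,\dots,g_N\}$, i.e.\ $d(hx',hy')<\eps$. Since the sets $\{T : T\text{ agrees with }S\text{ on }X_0\}$ form a neighborhood basis at $S$, this exhibits the $\eps$-proximal shifts as an open subset of $\mathcal S$.

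The $\eps$-minimal case is essentially the same, using instead $V_g=\{(x,y)\in S\times S:d(gx,y)<\eps\}$ and the window $X_0=\bigcup_{h\in F}\bigl(h^{-1}\{g_1,\dots,g_N\}\cup\{g_1,\dots,g_N\}\bigr)$, since $d(hx,y)<\eps$ depends on $x$ only through $x|_{h^{-1}\{g_1,\dots,g_N\}}$ and on $y$ only through $y|_{\{g_1,\dots,g_N\}}$. To finish, I would invoke the standard fact that an action on a compact metric space is proximal if and only if it is $\eps$-proximal for every $\eps>0$ — from witnesses $g_m$ with $d(g_mx,g_my)<1/m$, extract a subnet along which $g_mx$ and $g_my$ converge, necessarily to a common point — and similarly that it is minimal if and only if it is $\eps$-minimal for every $\eps>0$. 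Then the proximal shifts equal $\bigcap_{m\ge1}\{S:S\text{ is }\tfrac{1}{m}\text{-proximal}\}$ and the minimal shifts $\bigcap_{m\ge1}\{S:S\text{ is }\tfrac{1}{m}\text{-minimal}\}$, each a countable intersection of open sets and hence $G_\delta$.

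The one step that carries the actual content is the use of compactness of $S\times S$ to pass from the definition's quantifier over all of $G$ to a single finite set $F$ of witnesses; after that, the argument is just careful bookkeeping with finite subsets of $G$, and I do not expect any real obstacle there.
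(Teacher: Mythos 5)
Your proposal is correct and follows essentially the same route as the paper: use compactness of $S\times S$ to extract a finite set $F$ of witnesses for $\eps$-proximality (resp.\ $\eps$-minimality), note that the resulting condition depends only on restrictions of configurations to a fixed finite window, so it is stable on a basic neighborhood of $S$ in $\mathcal S$, and then write the proximal (resp.\ minimal) shifts as $\bigcap_m\{S : S \text{ is } \tfrac1m\text{-proximal (resp.\ minimal)}\}$. Your write-up is in fact slightly more explicit than the paper's, which leaves the finite-window bookkeeping and the equivalence ``proximal $\Leftrightarrow$ $\eps$-proximal for all $\eps$'' implicit.
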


The Baire Category Theorem guarantees that for well behaved spaces (such as our locally compact space  $\mathcal S$), a countable intersection of dense open sets is dense. Thus, to prove that the proximal shifts are dense in the closure of the strongly irreducible shifts, it suffices to show that the $\eps$-proximal shifts are dense in $\mathcal S$ for each $\eps$. That is, fixing $\eps$, we must show that for each strongly irreducible shift $S \subseteq A^G$ and each finite subset $X \subseteq G$ there exists a strongly irreducible shift $S'$ that agrees with $S$ on $X$, and is $\eps$-proximal.

To this end, we construct a class of shifts of $\{0,1\}^G$ (which we denote by $2^G$) which are $\eps$-proximal. Furthermore, for these shifts $\eps$-proximality  is witnessed by a particular configuration around the origin: one having a 1 at the origin, and zeros close to it. For a finite symmetric subset $X \subset G$ and $g,h \in G$ we say that $g$ and $h$ are {\em $X$-apart} if $g^{-1}h \not \in X$. 
\begin{definition}
  Let $X$ be a finite symmetric subset of $G$. A non-trivial shift $S \subset 2^G$ is an {\em $X$-witness} shift if
  \begin{enumerate}
  \item For each $s \in S$, $s(a)=1$ and $s(b)=1$ implies
    that $a$ and $b$ are $X$-apart.
  \item For each $s,t \in S$ there exists an $a \in G$ such that
    $s(a)=t(a)=1$.
  \end{enumerate}
\end{definition}
The construction of $X$-witness shifts in
Propositions~\ref{prop:r-witness for finite sets}
and~\ref{prop:r-witness si} contains the main technical effort of this
paper.

\subsection{A toy example}
\label{sec:informal}  
To give the reader some intuition and explain the role of ICC in the
construction of $X$-witness shifts, we now explain how to construct a
single configuration in $2^G$ with an $X$-witness {\em orbit}, and
show that such configurations do not exist for groups that are not
ICC. Note that the closure of this orbit is not necessarily an
$X$-witness shift; the construction of $X$-witness shifts requires
more work and a somewhat different approach, which we pursue later, in
the formal proofs.

Given a configuration $u \in 2^G$, we denote by
$G u = \{g u\,:\, g \in G\}$ the $G$-orbit of $u$. Given a finite
symmetric $X \subset G$, we say that a configuration $u\in 2^G$ is an
{\em $X$-witness configuration} if
\begin{enumerate}
\item For each $s \in G u$, $s(a)=1$ and $s(b)=1$ implies
  that $a$ and $b$ are $X$-apart.
\item For each $s,t \in G u$ there exists an $a \in G$ such that
  $s(a)=t(a)=1$.
\end{enumerate}
We now informally explain that when $G$ is ICC then for every such $X$
there exist $X$-witness configurations, and that when $G$ is not ICC
then there is a finite symmetric $X \subset G$ for which there are no such
configurations.

Suppose first that $G$ is not ICC. Then there cannot exist an
$X$-witness shift for every $X$. To see this, suppose that $g\in G$ is
an element with finitely many conjugates, and let $X$ be a finite
symmetric subset of $G$ that contains all the conjugates of
$g$. Assume towards a contradiction that there exists an $X$-witness
configuration $u$. So, by the second property of $X$-witness
configurations, there exists an $a\in G$, such that $[gu](a)=u(a)=1$,
which means $u(g^{-1}a)=u(a)=1$. Now, by the first property of $u$, we
need to have that $g^{-1}a$ and $a$ are $X$-apart, which means
$(g^{-1}a)^{-1}a = a^{-1}ga \notin X$. This is a contradiction, since
we let $X$ contain all the conjugates of $g$.

Consider now the case that $G$ is ICC. Given a finite symmetric $X$,
we choose a random configuration $u \in 2^G$ as follows. Assign to
each element of $G$ an independent uniform random variable in
$[0,1]$. Let $V_a$ be the random variable corresponding to $a\in
G$. For each $a\in G$, let $u(a) = 1$ iff $V_a > V_{a x}$ for all
$x\in X\setminus \{e\}$; i.e., $u(a)=1$ if $V_a$ is maximal in its
$X$-neighborhood. Note that if $g$ and $h$ are $X^2$-apart then the
event $u(g)=1$ and the event $u(h)=1$ are independent.

We claim that $u$ is, with probability one, an $X$-witness shift.  By
construction $u$ almost surely satisfies the first property: if
$s = g^{-1} u$ and $s(a)=s(b)=1$ then $u(g a)=u(g b)=1$, hence $g a$
and $g b$ are $X$-apart, and so $a$ and $b$ are $X$-apart. To satisfy
the second property, it must hold that for every $g \neq h \in G$
there is some $a \in G$ such that $u(ga) = u(ha)=1$. By the ICC
property, we can choose an $a \in G$ to make $g a$ and $h a$
arbitrarily far apart, as this corresponds to finding an $a$ such that
$a^{-1} (h^{-1}g) a$ is large. For such a choice of $a$, the events
$u(g a)=1$ and $u(h a)=1$ are independent, and, since we have
infinitely many such $a$'s that we can use, with probability
one at least one of them will give us the desired result.

\subsection{Constructing $X$-witness shifts}
We now return to the construction of $X$-witness shifts, which are the main tool in our proof of Theorem~\ref{thm:main}. The first step is to construct a single configuration which is an $X$-witness in a large finite set.
\begin{proposition}
\label{prop:r-witness for finite sets}
  Let $G$ be an ICC group. For each finite symmetric $X \subset G$ there exists an $s \in 2^G$ and a finite symmetric $Y \supset X$ such that 
  \begin{enumerate}
      \item For every $a,b \in G$, if $s(a)=s(b)=1$ then $a$ and $b$ are $X$-apart.
      \item For every $g,h \in Y^{100}$ there exists some $a \in Y$ such that $s(g a) = s(h a) = 1$.
  \end{enumerate}
\end{proposition}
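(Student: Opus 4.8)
My plan is to carry out a rigorous version of the random construction sketched in the toy example, but localized to a large finite ``ball'' so that all the estimates become finite and controllable. Fix the finite symmetric $X \subset G$ containing the identity. First I would choose an enormous finite symmetric set $Y \supset X$: large enough that, by the ICC property, for every pair $g,h \in Y^{100}$ there are many elements $a \in Y$ with $a^{-1}(h^{-1}g)a$ of large ``$X$-size'' — precisely, so that for each such pair one can find a collection $a_1,\dots,a_N \in Y$ (with $N$ growing, say, like a large constant times $\log|Y^{100}|$) for which the translated pairs $\{g a_i, h a_i\}$ are pairwise $X^2$-apart from one another and each pair is itself $X^2$-apart (so $g a_i$ and $h a_i$ are far apart). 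Producing such a $Y$ is the step where ICC is used essentially: because every nontrivial conjugacy class is infinite, conjugation by suitable group elements can push $h^{-1}g$ arbitrarily far out, and a counting/pigeonhole argument over the finitely many pairs in $Y^{100}$ lets one pick a single $Y$ that works simultaneously for all of them. This is where I expect the main technical friction — one must quantify ``far enough'' and ``enough disjointness'' so that the probabilistic union bound below closes.

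Next I would define the random configuration on the finite set $Y^{C}$ for a suitable constant $C$ (large enough to contain $Y^{100} \cdot Y$ and an $X$-neighborhood thereof): assign i.i.d.\ uniform $[0,1]$ variables $V_a$ to $a \in Y^C$, and set $s(a)=1$ iff $V_a > V_{ax}$ for all $x \in X\setminus\{e\}$, and $s(a)=0$ outside $Y^C$ (or on the boundary layer where the rule would reference undefined variables). Property (1) is then automatic and deterministic: if $s(a)=s(b)=1$ with $a^{-1}b \in X$ then $V_a > V_b$ and $V_b > V_a$, a contradiction — so any two $1$'s are $X$-apart. For property (2), fix $g,h \in Y^{100}$ and the associated $a_1,\dots,a_N \in Y$ from the previous paragraph. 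For each $i$, the pair of events $\{s(ga_i)=1\}$ and $\{s(ha_i)=1\}$ depends only on the $V$'s in the $X$-balls around $ga_i$ and $ha_i$; since $ga_i$ and $ha_i$ are $X^2$-apart these balls are disjoint, so the two events are independent, and each has probability exactly $1/|X|$ (the chance that a given uniform variable is the max among its $|X|$ neighbors). Hence $\Pr{s(ga_i)=1 \text{ and } s(ha_i)=1} = 1/|X|^2$. Because the index sets of variables used across different $i$ are also pairwise disjoint (the pairs were chosen $X^2$-apart from each other), these $N$ success events are mutually independent, so the probability that none occurs is $(1-1/|X|^2)^N$.

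Finally I would take a union bound over all pairs $g,h \in Y^{100}$: the probability that property (2) fails for some pair is at most $|Y^{100}|^2 (1-1/|X|^2)^N$, and by choosing $N$ (hence $Y$) large enough — $N$ a large enough multiple of $|X|^2 \log|Y^{100}|$ — this is strictly less than $1$. Therefore there exists a deterministic configuration $s \in 2^G$ (extend the configuration on $Y^C$ by $0$'s outside, noting properties (1) and (2) only constrain its values on $Y^{100}\cdot Y$ and their $X$-translates) satisfying both (1) and (2), which is exactly the claimed statement. One bookkeeping subtlety to handle carefully: the choice of $N$ depends on $|Y^{100}|$, which depends on $Y$, which must be large enough to supply $N$ disjoint good translates — so the argument needs the growth of available translates (guaranteed by ICC) to outpace the required $N$; since ICC gives \emph{infinitely} many conjugates one has as much room as needed, and a careful order of quantifiers (first fix how large $N$ must be as a function of the number of pairs, then invoke ICC to find $Y$ large enough) makes this non-circular. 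That interleaving is the one place I would slow down and be fully explicit.
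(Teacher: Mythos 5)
Your probabilistic core is exactly the paper's: the same max-of-i.i.d.-uniforms rule (so $\Pr{s(a)=1}=1/|X|$ and property (1) is deterministic), the same derivation of independence from $X^2$-apartness, and the same union bound $|Y^{100}|^2(1-|X|^{-2})^N<1$. The localization to $Y^C$ is a harmless cosmetic difference (the paper simply defines the random configuration on all of $G$). The genuine gap is in the step you yourself flag as ``friction'': the construction of $Y$. Your stated resolution of the circularity --- first fix how large $N$ must be as a function of the number of pairs, then invoke ICC to find $Y$ large enough --- is still circular, because the number of pairs is $|Y^{100}|^2$ and so cannot be fixed before $Y$ is chosen. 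To break the loop you need the number of good translates available inside $Y$ to grow faster than $\log|Y^{100}|\approx 200\log|Y|$ (times $|X|^2$), uniformly over all pairs $g,h\in Y^{100}$, and ``ICC gives infinitely many conjugates'' does not by itself yield any quantitative lower bound on how many of those conjugations can be realized by conjugators lying in a prescribed finite set.

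The paper's mechanism, absent from your proposal, is a single \emph{switching element} $g_s$ satisfying $g_s^{-1}xg_s\notin X^2$ for every non-identity $x\in X^2$; its existence follows from B.~H.~Neumann's lemma that a group is not covered by finitely many cosets of infinite-index subgroups, applied to the centralizers $C_x$ for $x\in X^2\setminus\{e\}$, which have infinite index precisely because $G$ is ICC. Taking $Y\supseteq Y_1\cup Y_1g_s$, for \emph{every} pair $(g,h)$ with $g\neq h$ at least one of $y,\,yg_s$ is distancing, so a constant fraction (at least $1/5$) of $Y$ consists of good conjugators simultaneously for all pairs; a bounded-degree independent-set argument then thins this to a fraction $1/(10|X^2|+5)$ whose translates are pairwise $X^2$-apart as required. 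A supply linear in $|Y|$ trivially beats the logarithmic demand, and the union bound closes. Without this step (or some equivalent quantitative use of ICC), your first paragraph is an assertion rather than a proof, and it is exactly the part of the proposition where ICC does the work.
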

The proof of this proposition---along with
Proposition~\ref{prop:r-witness si} below---contains the main
technical effort of this paper. The proof elaborates on the ideas of
the informal construction of \S\ref{sec:informal}:  we choose the configuration $s$ at random, and then show that it has the desired properties with positive probability. This stage crucially uses the assumption that the group is ICC, which translates to independence of some events that arise in the analysis of this random choice. This is the only step in the proof of Theorem~\ref{thm:main} in which we use the ICC property of $G$.

We use the configuration constructed in Proposition~\ref{prop:r-witness for finite sets} to construct $X$-witness shifts. These shifts will additionally (and importantly) be strongly irreducible.
\begin{proposition}
\label{prop:r-witness si}
Let $G$ be a group for which, for each finite symmetric $X \subset G$, there exists a configuration that satisfies the conditions of Proposition~\ref{prop:r-witness for finite sets}. Then for each such $X$ there also exists a strongly irreducible $X$-witness shift.
\end{proposition}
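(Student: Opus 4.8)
The plan is to realise the required shift as a space of \emph{tilings} of $G$ by (the controlled part of) the configuration produced by Proposition~\ref{prop:r-witness for finite sets}, and to transfer that configuration's property~(2) to the second $X$-witness axiom by exploiting the generous exponent $100$ it affords. Fix a finite symmetric $X$, and let $s\in 2^G$ and $Y\supset X$ be as in Proposition~\ref{prop:r-witness for finite sets}; only the restriction of $s$ to $Y^{101}$ will be used, since that is the part of $s$ whose structure is controlled. The building blocks are cellular partitions $\{T_d\}_{d\in D}$ of $G$ indexed by a net $D$ that is syndetic and separated with constants to be chosen, so that each cell $T_d$ has bounded diameter, contains a fixed neighbourhood of its centre $d$, and has a ``deep interior'' determined by $D$ on a bounded neighbourhood of the point in question. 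To such a partition I attach the configuration $t_D\in 2^G$ which, on the deep interior of each cell $T_d$, records the corresponding translate of $s$ (that is, $t_D(g)=s(d^{-1}g)$ there) and which is set to $0$ within a bounded distance of every seam. I would let $S$ be the set of all such $t_D$, arranging the definitions so that ``being a tiling of this kind'' is cut out by finitely many local rules; then $S$ is automatically a closed $G$-invariant subshift and, more importantly, every element of $S$ genuinely carries a tiling structure of this form, which is what the argument for property~(2) will need.

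Next I would verify the two defining properties. Non-emptiness and property~(1) are the easy half: applying Proposition~\ref{prop:r-witness for finite sets}(2) with $g=h=e$ gives $s(a)=1$ for some $a\in Y$, so each cell keeps at least one $1$ and $S\neq\emptyset$; and if $t\in S$ had $t(a)=t(b)=1$ with $a^{-1}b\in X$, then, since retained $1$'s lie deep inside their cells (deep enough to swallow $X$-translates), $a$ and $b$ would lie in a single common cell $T_d$, giving $s(d^{-1}a)=s(d^{-1}b)=1$ with $(d^{-1}a)^{-1}(d^{-1}b)=a^{-1}b\in X$, contradicting Proposition~\ref{prop:r-witness for finite sets}(1). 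Property~(2) is the core. Given $t_D$ and $t_{D'}$, one locates, using that $D$ and $D'$ are syndetic with a bounded constant, a cell $T_d$ of the first and a cell $T'_{d'}$ of the second whose centres are close, so that $g_0:=d^{-1}d'\in Y^{100}$, and such that a bounded neighbourhood of $d$ lies in the deep interior of both cells. There $t_D$ is the $d$-translate of $s$ and $t_{D'}$ is the $d'$-translate of $s$; applying Proposition~\ref{prop:r-witness for finite sets}(2) to the pair $e,\ g_0^{-1}\in Y^{100}$ produces $z\in Y$ with $s(z)=s(g_0^{-1}z)=1$, and then $p:=dz$ satisfies $t_D(p)=s(z)=1$ and $t_{D'}(p)=s(d'^{-1}dz)=s(g_0^{-1}z)=1$, a common $1$.

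For strong irreducibility, given $E_1,E_2$ with $E_1X_0\cap E_2X_0=\emptyset$ for a finite $X_0$ of size comparable to the cell diameter, and $t_1,t_2\in S$, I would glue the underlying tilings: take the new partition to coincide with that of $t_1$ on a bounded neighbourhood of $E_1$, with that of $t_2$ on a bounded neighbourhood of $E_2$, and to be any valid partition in between, which is possible because validity is a local condition; since a cell's contribution to the configuration depends only on the partition near that cell and since $1$'s near seams are suppressed, the result agrees with $t_1$ on $E_1$ and with $t_2$ on $E_2$. The step I expect to be the real obstacle is the bookkeeping at the cell boundaries, which must serve three competing ends at once: the cells must be ``fat'' enough that the central $1$'s needed for property~(2) survive and can be found deep inside cells of two different tilings simultaneously; the seam-suppression must be wide enough that property~(1) is never violated across cells; and the tiling structure must remain flexible enough that far-apart regions can be prescribed independently for strong irreducibility. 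Reconciling these --- likely via a more elaborate, possibly multi-scale, tiling than the naive one sketched above, and via a careful choice of the net's separation and syndeticity constants relative to the exponent $100$ --- is where I would expect the bulk of the technical work to lie.
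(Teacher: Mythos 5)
Your overall strategy --- plant translates of the configuration $s$ from Proposition~\ref{prop:r-witness for finite sets} inside the tiles of a tiling-type subshift, suppress $1$'s near the seams to get property~(1), and extract property~(2) from the second property of $s$ --- is the same as the paper's. But the step you flag at the end as ``the real obstacle'' is in fact a genuine gap, and it sits exactly at the heart of property~(2). With a single-scale partition there is no way to ``locate a cell $T_d$ of the first and a cell $T'_{d'}$ of the second \ldots{} such that a bounded neighbourhood of $d$ lies in the deep interior of \emph{both} cells'': the two tilings are independent, so the centre $d$ of a cell of $D$ can land arbitrarily close to a seam of $D'$, the point $p=dz$ with $z\in Y$ can fall into the suppression zone of $D'$ or into a neighbouring cell $T'_{d''}$ altogether, and then $t_{D'}(p)=0$ or $t_{D'}(p)=s(d''^{-1}p)$, which has nothing to do with $s(g_0^{-1}z)$. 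Your argument for property~(2) therefore does not go through as written; you have correctly guessed that a multi-scale structure is needed, but that structure is not optional bookkeeping --- it is the missing idea.

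The paper's resolution is to work with \emph{saturated packings} (maximal partial covers, not partitions) at two scales, $Y^{100}X$ and $YX$, merged so that small blocks fill the gaps between large ones. One centres at a large block of the first packing; then \emph{saturation} of the second packing forces that, within $Y^4$ of this centre, the second packing has either (I) a point in the $YX$-interior of one of its large blocks, or (II) the centre of a small block --- otherwise one could insert a small block, contradicting maximality. In case (I) the reference points $k_1^{-1},k_2^{-1}$ both lie in $Y^{99}$ and the common $1$ at $a_3\in Y$ stays inside $k_2Y^{100}$, hence inside the interior of the second packing's large block; in case (II) the small block itself supplies the translate of $s|_Y$, and property~(2) of Proposition~\ref{prop:r-witness for finite sets} is applied with $h=e$. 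This is why the statement of that proposition asks for a common $1$ for \emph{all} $g,h\in Y^{100}$ rather than just for the single relative position you would get from aligned tilings. A secondary benefit of packings over partitions: strong irreducibility of the space of saturated packings is a clean Zorn's-lemma gluing argument, whereas for exact partitions of a general countable group your interpolation step (``any valid partition in between'') would itself need justification.
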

The combination of Propositions~\ref{prop:r-witness for finite sets} and~\ref{prop:r-witness si} immediately yields the following. 
\begin{proposition}
\label{prop:r-witness si amenable icc-rf}
  Let $G$ be an ICC group. Then for each finite symmetric $X \subset G$ there exists a strongly irreducible $X$-witness shift $S \subset 2^G$.
\end{proposition}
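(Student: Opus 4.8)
The plan is to obtain Proposition~\ref{prop:r-witness si amenable icc-rf} by directly chaining the two preceding propositions, with no additional input. First I would use the hypothesis that $G$ is ICC to invoke Proposition~\ref{prop:r-witness for finite sets}: fixing an arbitrary finite symmetric $X \subset G$, that proposition produces a configuration $s \in 2^G$ together with a finite symmetric enlargement $Y \supset X$ such that $s(a)=s(b)=1$ forces $a$ and $b$ to be $X$-apart, and such that every pair $g,h \in Y^{100}$ admits an $a \in Y$ with $s(ga)=s(ha)=1$. Running this for all finite symmetric $X \subset G$ shows that $G$ satisfies the hypothesis of Proposition~\ref{prop:r-witness si}, namely that for each finite symmetric $X \subset G$ there is a configuration meeting the conditions of Proposition~\ref{prop:r-witness for finite sets}.

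Second, I would simply apply Proposition~\ref{prop:r-witness si} to that conclusion. It takes exactly the property just verified as its hypothesis and returns, for each finite symmetric $X \subset G$, a strongly irreducible $X$-witness shift $S \subset 2^G$, which is precisely the assertion of Proposition~\ref{prop:r-witness si amenable icc-rf}.

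The only point worth checking is that the quantifier structure lines up: Proposition~\ref{prop:r-witness si} requires the configuration-existence to hold for \emph{every} finite symmetric $X$, and Proposition~\ref{prop:r-witness for finite sets} delivers precisely that, uniformly in $X$, so no gap or circularity arises. Beyond this bookkeeping there is no real obstacle here; all of the genuine difficulty --- the probabilistic construction exploiting the ICC property in Proposition~\ref{prop:r-witness for finite sets}, and the passage from a single finite-range witness configuration to a globally strongly irreducible shift in Proposition~\ref{prop:r-witness si} --- has already been isolated into those two statements, which we are entitled to assume. Hence the deduction is immediate.
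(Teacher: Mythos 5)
Your proposal is correct and matches the paper exactly: the paper states that Proposition~\ref{prop:r-witness si amenable icc-rf} follows immediately from combining Propositions~\ref{prop:r-witness for finite sets} and~\ref{prop:r-witness si}, which is precisely the chaining you describe. The quantifier bookkeeping you check is the only content of the deduction, and it lines up as you say.
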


\subsection{$\eps$-proximal shifts}
Finally, we use these strongly irreducible $X$-witness shifts to construct approximations to a given strongly irreducible shift $S$ that are both $\eps$-proximal and $\eps$-minimal.
\begin{proposition}
\label{prop:eps-proximal approx}
  Let $G$ be a group for which there exists, for each finite symmetric $X \subset G$, a strongly
  irreducible $X$-witness shift.  Let $T \subseteq A^G$ be a strongly
  irreducible shift. Then for each $\eps$ and finite $X\subset G$ there exists a strongly irreducible shift $T' \subseteq 2^G$ that is $\eps$-proximal, $\eps$-minimal, and agrees with $T$ on $X$.
\end{proposition}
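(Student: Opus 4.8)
The plan is to realize $T'$ as the set of those configurations of $T$ in which a single fixed finite pattern $F$ occurs along a sparse, mutually synchronizing family of positions, where this family is extracted from a strongly irreducible witness shift; the point of insisting on one \emph{universal} marker $F$ is that it can serve simultaneously as the witness for $\eps$-proximality and for $\eps$-minimality, while leaving the configuration completely free away from the marked positions. First I would make harmless enlargements: replace $X$ by a finite symmetric set containing the identity, containing a strong-irreducibility window $Y$ of $T$, and containing $D := \{g_1,\dots,g_{\lceil 1/\eps\rceil}\}$, so that $d(s,t) < \eps$ whenever $s$ and $t$ agree on $D$ — enlarging $X$ only strengthens the statement. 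Next, using strong irreducibility of $T$ I would build a finite symmetric $X_F \supseteq X$ and a pattern $F \colon X_F \to A$ that extends to a configuration of $T$ and that contains a translated copy of every one of the finitely many patterns in $T|_D$: put one occurrence of each such $D$-pattern at positions spread far apart and glue them with strong irreducibility of $T$. Finally set $X^\ast := X_F^{100}$ and, by hypothesis, fix a strongly irreducible $X^\ast$-witness shift $W \subseteq 2^G$.

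I would then define
\[
  T' := \bigl\{\, c \in T \;:\; \text{there is } w \in W \text{ with } c(px) = F(x)\text{ for all } p \in w^{-1}(1),\ x \in X_F \,\bigr\},
\]
a configuration of $T'$ being a configuration of $T$ in which $F$ is planted at every point of the $1$-set of some witness configuration. That $T'$ is $G$-invariant is immediate ($gc$ is witnessed by $gw$), and it is closed: if $c_n \to c$ with $c_n$ witnessed by $w_n$, pass to $w_n \to w \in W$ and observe that each $p \in w^{-1}(1)$ has $w_n(p)=1$ eventually, whence $c(px)=F(x)$ on $X_F$. It is nonempty because the windows $\{pX_F : p \in w^{-1}(1)\}$ are pairwise $Y$-separated (as $X^\ast = X_F^{100}$), so the copies of $F$ can be glued into a configuration of $T$ by the standard extension of strong irreducibility to countably many well-separated pieces. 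It is non-singleton once $T$ is, since a non-singleton strongly irreducible shift takes at least two values at every point, and we will see $T'|_X = T|_X$.

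Now the four required properties. For \emph{$\eps$-proximality}, given $c,c' \in T'$ witnessed by $w,w' \in W$, the witness property yields $a$ with $w(a)=w'(a)=1$, so $a^{-1}c$ and $a^{-1}c'$ both restrict to $F$ on $X_F \supseteq D$, hence $d(a^{-1}c, a^{-1}c') < \eps$. For \emph{$\eps$-minimality}, given $c,c' \in T'$, pick $a$ in the $1$-set of a witness of $c$ (nonempty by the witness property), so $c(ax) = F(x)$ on $X_F$; since $c' \in T$, the pattern $c'|_D$ lies in $T|_D$, hence equals $f \mapsto F(a_0 f)$ for some $a_0 \in G$ with $a_0 D \subseteq X_F$; then $g := (a a_0)^{-1}$ gives $(gc)(f) = c(a a_0 f) = F(a_0 f) = c'(f)$ for all $f \in D$, so $d(gc,c') < \eps$. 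For \emph{agreement with $T$ on $X$}: one inclusion holds since $T' \subseteq T$; for the reverse, given $t \in T$ I would pick $w \in W$ whose $1$-set avoids the finite set $K$ of points $p$ with $pX_FY \cap XY \neq \emptyset$ — possible because the $1$-set of a witness configuration is $X^\ast$-separated and hence far too sparse to meet every translate of the much smaller set $K$ — and then glue $F$ on the windows $\{pX_F\}$ together with $t$ on $X$ (mutually $Y$-separated) into a configuration $c \in T'$ with $c|_X = t|_X$. For \emph{strong irreducibility of $T'$}: given $c_1, c_2 \in T'$ witnessed by $w_1, w_2$ and finite sets $E_1, E_2$ separated by a sufficiently large window, use strong irreducibility of $W$ to find $w \in W$ agreeing with $w_i$ near $E_i$, then strong irreducibility of $T$ to find $c \in T$ agreeing with $c_i$ on $E_i$ and equal to $F$ on each $pX_F$ with $p \in w^{-1}(1)$; these constraints are compatible because near $E_i$ the configuration $c_i$ already carries $F$ at the relevant points of $w_i^{-1}(1)$, which coincide there with $w^{-1}(1)$.

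The main obstacle, conceptually, is exactly the construction of the universal marker $F$ and the recognition that a single marker can discharge both $\eps$-proximality (via a \emph{common} occurrence of $F$) and $\eps$-minimality (via the fact that $F$ \emph{contains} every $D$-pattern of $T$); this is what permits the configuration to be left unconstrained between marked positions and thereby keeps $T'$ large enough to be strongly irreducible and to agree with $T$ on $X$. The remaining difficulty is purely bookkeeping — choosing the nested scales $X \subseteq X_F \subseteq X^\ast$ and the strong-irreducibility windows of $T$ and of $W$ so that every invocation of strong irreducibility, including the countable-gluing version, is legitimate — which should be routine once $X^\ast$ is taken large enough relative to $X_F$ and $Y$.
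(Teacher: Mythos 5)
Your construction is, at heart, the same as the paper's: a universal marker pattern built from strong irreducibility of $T$ so as to contain all small patterns of $T$, planted at the $1$-set of a strongly irreducible witness shift, with a common $1$ giving $\eps$-proximality and the marker's universality giving $\eps$-minimality. The difference is architectural. The paper defines $T'$ as the image of an explicit continuous equivariant map $\phi\colon S\times T\to A^G$ (doing the surgery configuration-by-configuration, with a lexicographic rule to make the boundary interpolation canonical), so that invariance, closedness, and above all strong irreducibility of $T'$ come for free, as $T'$ is a factor of a product of strongly irreducible shifts. You instead define $T'$ as the subset of $T$ of configurations compatible with \emph{some} witness; this is a cleaner definition but shifts the burden onto direct verifications of closedness, non-emptiness, and strong irreducibility, each of which needs the countable-gluing extension of strong irreducibility and careful handling of marker windows that straddle the prescribed regions $E_1,E_2$. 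Your sketches of these verifications are essentially right, and the scales can be arranged so that they go through, so this part is a legitimate (if more laborious) alternative to the paper's factor-map trick.

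There is, however, one genuine gap: the reverse inclusion $T|_X\subseteq T'|_X$. You justify the existence of a $w\in W$ whose $1$-set avoids the finite set $K$ by saying an $X^\ast$-separated set is ``far too sparse to meet every translate of $K$.'' Sparseness alone does not give this (in $\Z$, the $2$-separated set $2\Z$ meets every translate of $\{0,1\}$), and the more careful argument one would substitute --- deriving a contradiction from ``every $w\in W$ has a $1$ in $K$'' by comparing overlapping translates of $K$ --- degenerates when $\langle X_F\rangle$ is finite. That case is not exotic: locally finite ICC groups such as $S_\infty$ are central examples for the main theorem, and there every finite symmetric $X_F$ generates a finite subgroup. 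Since the hypothesis only provides \emph{some} witness shift, you cannot assume it admits configurations whose $1$-set misses $K$. The fix is exactly the paper's move and stays entirely within your framework: build the marker $F$ so that it contains every $X$-pattern of $T$ (not merely every $D$-pattern). Then every $X$-pattern of $T$ occurs inside $F$, hence inside every configuration of $T'$ at a translate of any point of any witness's $1$-set, which gives $T|_X\subseteq T'|_X$ directly, with no need to avoid anything; your $\eps$-minimality argument is unaffected since $D\subseteq X$.
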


An immediate consequence of Proposition~\ref{prop:r-witness si amenable icc-rf}, Proposition~\ref{prop:eps-proximal approx}, and  Claim~\ref{clm:G-delta} is the following.
\begin{proposition}
\label{prop:dense-G_delta}
 Let $G$ be an ICC group. Then there is a dense $G_\delta$ set in $\mathcal S$ for which the action   $G \cc S$ is  minimal and proximal.
 \end{proposition}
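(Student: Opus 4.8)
The plan is to write the set of minimal proximal shifts as a countable intersection of open dense subsets of $\mathcal{S}$ and to apply the Baire Category Theorem, exactly along the lines sketched after Claim~\ref{clm:G-delta}. Concretely, for each $n \in \N$ let $U_n \subseteq \mathcal{S}$ be the set of shifts $S$ for which $G \cc S$ is both $(1/n)$-proximal and $(1/n)$-minimal. By Claim~\ref{clm:G-delta} the $(1/n)$-proximal shifts and the $(1/n)$-minimal shifts each form an open subset of $\mathcal{S}$, so each $U_n$ is open. A shift $S$ lies in $\bigcap_{n} U_n$ exactly when $G \cc S$ is $\eps$-proximal and $\eps$-minimal for every $\eps > 0$ (given $\eps$, choose $n$ with $1/n < \eps$), and by the definitions of these notions this happens exactly when $G \cc S$ is minimal and proximal. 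So it will suffice to show that each $U_n$ is dense in $\mathcal{S}$, since the Baire Category Theorem applies to $\mathcal{S}$ (as observed above).

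The density of $U_n$ is where Proposition~\ref{prop:eps-proximal approx} enters. Since $G$ is ICC, Proposition~\ref{prop:r-witness si amenable icc-rf} supplies a strongly irreducible $X$-witness shift for every finite symmetric $X \subset G$, so $G$ satisfies the hypothesis of Proposition~\ref{prop:eps-proximal approx}. Now $\mathcal{S}$ is, by definition, the closure of the set of non-trivial strongly irreducible shifts, and the (open) sets $\{S : S \text{ agrees with } T \text{ on } Z\}$, over finite $Z \subset G$, form a neighborhood basis at each shift $T$; it therefore suffices to check that for every non-trivial strongly irreducible shift $T$ and every finite $Z \subset G$, the set $\{S : S \text{ agrees with } T \text{ on } Z\}$ contains a member of $U_n$. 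Given such $T$ and $Z$, first enlarge $Z$ so that it contains a point $g_0$ at which two configurations of $T$ disagree---this is possible because $T$ is non-trivial, and it only shrinks the neighborhood. Then apply Proposition~\ref{prop:eps-proximal approx} to $T$ with this $Z$ and with $\eps = 1/n$, obtaining a strongly irreducible shift $T'$ that is $(1/n)$-proximal, $(1/n)$-minimal, and agrees with $T$ on $Z$. Since $T'$ agrees with $T$ on $Z$ and two configurations of $T$ disagree at $g_0 \in Z$, the shift $T'$ also has two configurations disagreeing at $g_0$, so $T'$ is non-trivial and therefore lies in $\mathcal{S}$; thus $T' \in U_n$ and $T'$ lies in the prescribed neighborhood. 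The Baire Category Theorem then gives that $\bigcap_n U_n$ is a dense $G_\delta$ in $\mathcal{S}$, and by the first paragraph every shift in it yields a minimal proximal action of $G$.

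I do not expect a genuine obstacle here: as the excerpt indicates, the statement is an immediate consequence of Claim~\ref{clm:G-delta} and Proposition~\ref{prop:eps-proximal approx}, and the argument above is essentially Baire-category bookkeeping. The two points needing a little care will be (i) ensuring the approximating shift $T'$ is non-trivial, so that it genuinely belongs to $\mathcal{S}$ rather than merely to the ambient space of shifts---handled by enlarging $Z$ to include a point where $T$ varies---and (ii) the routine identification of a shift over $\{0,1\}$ with a shift over the alphabet used in the definition of $\mathcal{S}$, which is the reason it suffices to take that alphabet to be $\{0,1\}$. All the real content lies upstream, in the construction of $X$-witness shifts (Propositions~\ref{prop:r-witness for finite sets} and~\ref{prop:r-witness si}) and of the $\eps$-proximal, $\eps$-minimal approximations (Proposition~\ref{prop:eps-proximal approx}).
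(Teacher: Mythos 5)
Your argument is correct and is exactly the argument the paper intends: the paper itself gives no separate proof of this proposition beyond declaring it an immediate consequence of Claim~\ref{clm:G-delta} and Proposition~\ref{prop:eps-proximal approx} via the Baire Category Theorem, and your write-up supplies precisely that bookkeeping (openness of the $U_n$ from Claim~\ref{clm:G-delta}, density from Propositions~\ref{prop:r-witness si amenable icc-rf} and~\ref{prop:eps-proximal approx}). Your two points of care---ensuring the approximating shift is non-trivial so it lies in $\mathcal S$, and the alphabet identification---are legitimate small gaps in the paper's exposition that you handle correctly.
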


In the next proposition we show that this result can be strengthened to show that a generic shift is additionally faithful.
 \begin{proposition}
 \label{prop:faithful}
 Let $G$ be an ICC group. Then there is a dense $G_\delta$ set in $\mathcal S$ for which the action   $G \cc S$ is {\em faithful}, minimal, and proximal.
 \end{proposition}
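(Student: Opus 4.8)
The plan is to combine Proposition~\ref{prop:dense-G_delta} with a separate genericity argument for faithfulness. For each $g \in G\setminus\{e\}$, set
\[
  U_g = \{\, S \in \mathcal{S} \;:\; gs \neq s \text{ for some } s \in S \,\}.
\]
A shift is faithful exactly when it belongs to $U_g$ for every $g\neq e$, so the faithful shifts form $\bigcap_{g\neq e} U_g$, a countable intersection since $G$ is countable. I will show that each $U_g$ is open and dense in $\mathcal{S}$; then the faithful shifts are a dense $G_\delta$ (using that $\mathcal{S}$ is locally compact, hence Baire), and intersecting this with the dense $G_\delta$ of minimal proximal shifts from Proposition~\ref{prop:dense-G_delta} gives the dense $G_\delta$ required by the proposition.

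Openness of $U_g$ is a direct check against the basis for the topology on $\mathcal{S}$ described in Section~2: if $s\in S$ has $gs\neq s$, then $s$ and $gs$ disagree at some coordinate $g_n$, i.e.\ $s(g_n)\neq s(g^{-1}g_n)$; taking a finite $X\subseteq G$ containing $g_n$ and $g^{-1}g_n$, every shift agreeing with $S$ on $X$ contains a configuration with the same restriction to $X$ as $s$, hence one on which $g$ also acts non-trivially. So the whole neighborhood lies in $U_g$.

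For density I would prove the stronger claim that $U_g$ contains \emph{every} non-singleton strongly irreducible shift; since those are by definition dense in $\mathcal{S}$, this immediately gives density of $U_g$. So let $T\subseteq A^G$ be a non-singleton strongly irreducible shift, with $W$ a finite symmetric witness set containing $e$ (enlarging the given witness set does no harm). Two observations: first, by $G$-invariance the set of symbols appearing at a coordinate of $T$ is independent of the coordinate, and has size at least $2$ since $T$ is not a singleton; second---and this is the one place the hypothesis is used---since $G$ is ICC the conjugacy class of $g$ is infinite, so it is not contained in the finite set $W^2$, and we may fix $h\in G$ with $h^{-1}gh\notin W^2$, equivalently $hW\cap ghW=\emptyset$ (using that $W$ is symmetric). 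Now apply strong irreducibility with $E_1=\{h\}$ and $E_2=\{gh\}$, using configurations of $T$ that realize two different symbols at $h$ and at $gh$ respectively: this produces $s\in T$ with $s(h)\neq s(gh)$, and then $(gs)(gh)=s(h)\neq s(gh)$, so $gs\neq s$ and $T\in U_g$.

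The only step with real content is the density argument, and the crux there is the invocation of the ICC property: it is precisely what converts ``$g\neq e$'' into the separation $hW\cap ghW=\emptyset$ that strong irreducibility can act on. For a group that is not ICC this genuinely fails---a non-trivial central element lying inside $W^2$ would be an obstruction---matching Proposition~\ref{prop:non-icc}, which says that a faithful minimal proximal action forces every non-trivial element to have an infinite conjugacy class. Everything else---openness of $U_g$, the Baire and $G_\delta$ bookkeeping, and the final intersection with Proposition~\ref{prop:dense-G_delta}---is routine.
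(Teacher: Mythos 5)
Your proposal is correct and follows essentially the same route as the paper: reduce to showing that $g$-faithfulness is open and dense for each non-trivial $g$, prove density by showing every non-singleton strongly irreducible shift is $g$-faithful, and obtain that via the ICC property (a conjugate of $g$ escapes any finite set) combined with strong irreducibility to produce a configuration separating two coordinates in the same $g$-orbit. The paper phrases the density step as a contradiction (all conjugates acting trivially would force $s(h^{-1})=s(e)$ for infinitely many $h$, contradicting strong irreducibility), whereas you construct the witnessing configuration directly, but the content is identical.
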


Given all this, the proof of our main theorem follows easily.
\begin{proof}[Proof of Theorem~\ref{thm:main}]
 That groups with no ICC quotients are strongly amenable follows immediately from Proposition~\ref{prop:non-icc}. Let $G$ be ICC. By Proposition~\ref{prop:dense-G_delta} the proximal minimal shifts are a dense $G_\delta$ in $\mathcal S$, and in particular exist, since $\mathcal S$ is non-empty (e.g., the full shift $A^G$ is strongly irreducible and non-constant). Since there are no trivial shifts in $\mathcal S$, and since non-trivial minimal shifts have no fixed points, we have proved that $G$ is not strongly amenable. 
\end{proof}

\section{Proofs}

\subsection{Proof of Proposition \ref{prop:r-witness for finite sets}}
  Let $G$ be an ICC group, and let $X$ be a finite, symmetric subset of $G$. We choose a random configuration $u \in 2^G$ as follows. 
  Assign to each element of $G$ an independent uniform random variable in $[0,1]$. Let $V_a$ be the random variable corresponding to $a\in G$. For each $a\in G$, let $u(a) = 1$ iff $V_a > V_{ax}$ for all $x\in X\setminus \{e\}$. That is, let $u(a)=1$ if $V_a > V_b$ whenever $a^{-1}b\in X$ and $b\neq a$. The following claim is an immediate consequence of the definition of $u$.
  \begin{claim}
  \label{clm:independence}
     If $a_1,\ldots, a_n$ are $X^2$-apart\footnote{Recall that given a finite symmetric subset $X \subset G$ and $g, h \in G$, we say that $g$ and $h$ are $X$-apart if $g^{-1}h \not \in X$.} for $a_i\in G$, then $\{u(a_i)=1\}$ are independent events.
  \end{claim}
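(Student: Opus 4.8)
The plan is to show that each event $\{u(a_i)=1\}$ depends only on the i.i.d.\ variables $V_b$ for $b$ ranging over a single left translate of $X$, and that the $X^2$-apart hypothesis forces these translates to be pairwise disjoint; independence of the events then follows from the elementary fact that functions of disjoint sub-collections of independent random variables are independent.

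First I would pin down the ``support'' of the indicator $\ind{u(a)=1}$. By definition $u(a)=1$ iff $V_a > V_{ax}$ for every $x\in X\setminus\{e\}$, so the event $\{u(a)=1\}$ is measurable with respect to the $\sigma$-algebra generated by $\{V_b : b\in aX\}$, using that $e\in X$ so that $a=ae\in aX$ as well. Thus $\ind{u(a)=1}=f_a\big((V_b)_{b\in aX}\big)$ for a fixed measurable (indeed, finite combinatorial) function $f_a$ of the variables indexed by the finite set $aX$.

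Next I would translate the hypothesis into disjointness. Suppose $a_i$ and $a_j$ are $X^2$-apart, i.e.\ $a_i^{-1}a_j\notin X^2$; I claim $a_iX\cap a_jX=\emptyset$. Indeed, if $a_ix=a_jy$ for some $x,y\in X$, then $a_i^{-1}a_j=xy^{-1}\in XX^{-1}=XX=X^2$, the middle equality because $X$ is symmetric, contradicting the assumption. Hence, under the pairwise $X^2$-apart hypothesis, the finite sets $a_1X,\dots,a_nX$ are pairwise disjoint.

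Finally, since $(V_b)_{b\in G}$ are mutually independent, the random vectors $(V_b)_{b\in a_1X},\dots,(V_b)_{b\in a_nX}$, being indexed by pairwise disjoint blocks, form an independent family; applying the measurable maps $f_{a_1},\dots,f_{a_n}$ to these independent vectors preserves independence, so $\{u(a_1)=1\},\dots,\{u(a_n)=1\}$ are independent events. I do not expect a genuine obstacle here: the whole content is the two bookkeeping observations that $\{u(a)=1\}$ is determined by the configuration of $V$ on $aX$, and that being $X^2$-apart is precisely disjointness of these supports. The only point that needs a moment's care is that the symmetry of $X$ is what makes $XX^{-1}=X^2$, and (depending on the running convention) that $e\in X$, so that the support is exactly $aX$ rather than $aX\cup\{a\}$.
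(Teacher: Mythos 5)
Your proof is correct and is exactly the argument the paper treats as immediate (it offers no written proof): the event $\{u(a)=1\}$ is determined by $(V_b)_{b\in aX}$, being $X^2$-apart forces the translates $a_iX$ to be pairwise disjoint via $XX^{-1}=X^2$, and functions of disjoint blocks of i.i.d.\ variables are independent. The convention $e\in X$ that you flag is indeed what the paper implicitly assumes (it is needed both here and for its computation $\Pr{E_c}=1/|X|$), so your caveat is well placed.
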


  Clearly, for all values of the random configuration, $u(a)=u(b)=1$ implies that $a^{-1}b\notin X$ for all $a,b\in G$, which means that $a$ and $b$ are $X$-apart. So the random configuration $u$ almost surely satisfies the first part of the proposition. It thus remains to find a finite symmetric subset $Y \supset X$ such that, with positive probability for the random configuration $u$, for each $g,h \in Y^{100}$ there exists some $a \in Y$ such that $u(g a) = u(h a) = 1$.
  
  The next lemma claims that there exists a subset $Y$ with certain useful properties. We use this lemma to prove our proposition, and then prove the lemma.
  \begin{lemma}
  \label{lem:Y}
    There exists a $Y \supset X$ with the following properties.
    \begin{enumerate}
        \item $$|Y|^{200} (1-|X|^{-2})^{|Y|/(20|X^2|+5)} < 1.$$
        \item For each $g,h \in G$ there exists a subset $Y_{g,h} \subseteq Y$ with the following properties.
        \begin{enumerate}
            \item $|Y_{g,h}| \geq |Y|/(20|X^2|+5)$.
            \item For $y\in Y_{g,h}$, $g y$ and $h y$ are $X^2$-apart.
            \item For $y_1\neq y_2\in Y_{g,h}$, $w_1$ and $w_2$ are $X^2$-apart for any $w_1\in\{g y_1, h y_1\}$ and $w_2\in\{g y_2, h y_2\}$.
        \end{enumerate}
    \end{enumerate}
  \end{lemma}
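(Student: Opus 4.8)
The plan is to choose a single finite symmetric $Y \supset X$ once and for all by a greedy construction, and then, for each pair $(g,h)$, to extract $Y_{g,h}$ from it by an independent‑set argument in a bounded‑degree graph; the size $|Y|$ will be taken large only at the end, which is what makes inequality~(1) hold automatically. The first observation is that the requirement ``$gy_1$ and $hy_2$ are $X^2$-apart'' depends on $(g,h)$ only through $k:=g^{-1}h$, since it says exactly that $y_1^{-1}ky_2 \notin X^2$; moreover for $g=h$ the second conclusion of Proposition~\ref{prop:r-witness for finite sets} holds trivially (take any $a$ with $s(ga)=1$), so we may assume $k \neq e$. The key structural point is then: for $x \in X^2\setminus\{e\}$, the set $\{y\in G : y^{-1}ky=x\}=\{y : k=yxy^{-1}\}$ is either empty or a single left coset of the centralizer $C_G(x)$, and $C_G(x)$ has \emph{infinite index} in $G$ because $G$ is ICC and $x \neq e$. (This is the only place the ICC hypothesis is used in the proof of Proposition~\ref{prop:r-witness for finite sets}.) Hence, for every $k \neq e$ and every $Y$, the ``bad set'' $B_k := \{y\in Y : y^{-1}ky \in X^2\}$ is covered by at most $|X^2|-1$ cosets, one of each $C_G(x)$ with $x \in X^2\setminus\{e\}$; so if $Y$ meets every left coset of every such $C_G(x)$ in at most $c$ points, then $|B_k| \le (|X^2|-1)c$ uniformly in $k$.

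To build such a $Y$, I would take $Y = A \cup A^{-1} \cup X$ with $A=\{a_1,\dots,a_N\}$ chosen greedily: having picked $a_1,\dots,a_{t-1}$, choose $a_t$ outside the finite union, over $j<t$ and $x \in X^2\setminus\{e\}$, of the left cosets $a_jC_G(x)$ and the right cosets $C_G(x)a_j$, and also outside $X\cup\{a_1,\dots,a_{t-1}\}$. Each right coset $C_G(x)a$ equals the left coset $a\bigl(a^{-1}C_G(x)a\bigr)=a\,C_G(a^{-1}xa)$ of a subgroup of the same (infinite) index, so this is a finite union of cosets of infinite‑index subgroups (the trivial subgroup included, as $G$ is infinite), which cannot be all of $G$ by B.~H.~Neumann's theorem that no group is a finite union of cosets of infinite‑index subgroups; thus $a_t$ exists at every stage. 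By construction the elements $a_j$ lie in pairwise distinct left $C_G(x)$-cosets, and likewise the elements $a_j^{-1}$, so $Y$ meets each left $C_G(x)$-coset in at most $c:=|X|+2$ points. Also $Y$ is finite, symmetric, contains $X$, and $|Y|\ge N$.

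Now fix $g \neq h$ and $k=g^{-1}h$. On the vertex set $Y\setminus B_k$, join distinct $y_1,y_2$ whenever $y_1^{-1}ky_2\in X^2$ or $y_2^{-1}ky_1\in X^2$; since $\{y_2 : y_1^{-1}ky_2\in X^2\}\subseteq k^{-1}y_1X^2$ and $\{y_2 : y_2^{-1}ky_1\in X^2\}\subseteq ky_1X^2$ (using that $X^2$ is symmetric), this graph has maximum degree at most $2|X^2|$ and so contains an independent set $Y_{g,h}$ with $|Y_{g,h}| \ge |Y\setminus B_k|/(2|X^2|+1) \ge \bigl(|Y|-(|X^2|-1)(|X|+2)\bigr)/(2|X^2|+1)$. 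For any $y_1,y_2\in Y_{g,h}$ --- including $y_1=y_2$, which is exactly why $B_k$ was removed --- we then have $y_1^{-1}ky_2\notin X^2$, i.e.\ $gy_1$ and $hy_2$ are $X^2$-apart, so property~2(b) holds. Finally, choosing $N$ (hence $|Y|\ge N$) large enough that $|Y|-(|X^2|-1)(|X|+2)\ge |Y|/5$ makes this at least $|Y|/\bigl(5(2|X^2|+1)\bigr)=|Y|/(10|X^2|+5)$, giving~2(a); enlarging $N$ further makes inequality~(1) hold, since $|Y|^{200}(1-|X|^{-2})^{|Y|/(10|X^2|+5)}\to 0$ as $|Y|\to\infty$.

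I expect the main obstacle to be the construction of $Y$: the insight that the correct ``spreading'' condition is with respect to left (and right) cosets of the centralizers $C_G(x)$, $x\in X^2\setminus\{e\}$, and that it can be met precisely because these centralizers have infinite index --- this is where the ICC hypothesis enters, through Neumann's theorem on coverings by cosets. The remaining steps are then routine bookkeeping: a coset count bounds $B_k$, a greedy bound produces the independent set $Y_{g,h}$, and letting $N\to\infty$ handles the quantitative inequalities.
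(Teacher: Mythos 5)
Your proof is correct, and while it shares the paper's skeleton (reduce the ICC hypothesis to the infinite index of the centralizers $C_G(x)$, $x\in X^2\setminus\{e\}$, invoke Neumann's theorem that a group is not a finite union of cosets of infinite-index subgroups, and finish by extracting an independent set from a graph of degree at most $2|X^2|$), the middle of the argument is genuinely different. The paper uses Neumann's theorem once, to produce a single \emph{switching element} $g_s$ with $g_s^{-1}xg_s\notin X^2$ for all non-identity $x\in X^2$; it then takes $Y\supseteq Y_1\cup Y_1g_s$ and argues by the pairing $y\mapsto yg_s$ that at least a fifth of $Y$ is distancing for every pair $(g,h)$. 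You instead use Neumann's theorem repeatedly, in a greedy construction that spreads $Y$ across the left and right cosets of the centralizers, so that the non-distancing set $B_k$ has size bounded by a constant $(|X^2|-1)(|X|+2)$ \emph{independent of} $|Y|$ --- a stronger intermediate bound than the paper's ``at least $|Y|/5$ are distancing,'' though both comfortably yield $|Y_{g,h}|\ge |Y|/(10|X^2|+5)$ once $|Y|$ is large. Your version avoids the switching-element trick at the cost of a slightly more elaborate construction of $Y$; the paper's version builds $Y$ almost arbitrarily and concentrates the cleverness in $g_s$. Two minor points in your favor: you correctly observe that the lemma must be read as restricted to $g\neq h$ (for $g=h$ and $y_1=y_2$ the conclusion is unsatisfiable since $e\in X^2$, and the paper's own proof also silently fixes $g\neq h$), and your explicit removal of $B_k$ from the vertex set cleanly handles the diagonal case $y_1=y_2$, which the paper handles via the set $Y'_{g,h}$ of distancing elements.
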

  
  For $c,d,y \in G$, let $E_c$ be the event that $u(c)=1$, and let $E_{c,d}^y = E_{c y} \cap E_{d y}$. Now fix $g,h \in G$. 
  \begin{enumerate}

    \item By the second property of $Y_{g,h}$, $g y$ and $h y$ are $X^2$-apart for any $y\in Y_{g,h}$. Hence $E_{gy}$ and $E_{hy}$ are independent, by Claim~\ref{clm:independence}.
    
    \item $\Pr{E_c} = 1/|X|$ for all $c\in G$.
    
    \item Combining the previous two results: $\Pr{E_{g,h}^y} = |X|^{-2}$ for all $y\in Y_{g,h}$. So $\Pr{\neg E_{g,h}^y} = 1-|X|^{-2}$.
    
    \item $E_{g,h}^y$ are independent events for different values of $y\in Y_{g,h}$. This is because (I) $g y$ and $h y$ are $X^2$-apart for any $y\in Y_{g,h}$, and (II) $w_1$ and $w_2$ are $X^2$-apart for any $y_1\neq y_2\in Y_{g,h}$, $w_1\in\{g y_1, h y_1\}$, and $w_2\in\{g y_2, h y_2\}$, which means $\{ E_{g y}, E_{h y} \:|\: y\in Y_{g,h}\}$ are independent events. And finally, since $E_{g,h}^y = E_{gy} \cap E_{hy}$, we get that $E_{g,h}^y$ are independent events for $y\in Y_{g,h}$.

    \item We say that the pair $(g, h)$ {\em fails} if $E_{g,h}^y$ does not happen for any $y\in Y_{g,h}$. So, by the previous two results,
    \begin{align*}
        \Pr{(g, h) \text{ fails}} & = \Pr{E_{g,h}^y \text{ for no } y\in Y_{g,h}} \\
        & = (1-|X|^{-2})^{|Y_{g,h}|} \\
        & \leq (1-|X|^{-2})^{|Y|/(20|X^2|+5)},
    \end{align*}
    where the last inequality follows from the first property of $Y_{g,h}$.
  \end{enumerate}

  By the last inequality, union bound, and the first property of $Y$:
  \begin{align*}
    \Pr{(g,h) \text{ fails for some } g,h\in Y^{100} } & \leq |Y^{100}|^2(1-|X|^{-2})^{|Y|/(20|X^2|+5)} \\
    & \leq |Y|^{200}(1-|X|^{-2})^{|Y|/(20|X^2|+5)} < 1.
  \end{align*}
  So, there is at least one configuration, say $s$, for which no $(g,h)$ fails for $g,h\in Y^{100}$. Therefore, for all $g,h\in Y^{100}$ there is an $a\in Y$ such that $s(ga) = s(ha) = 1$. So this $s$ satisfies the second part of the proposition, which concludes the proof of Proposition ~\ref{prop:r-witness for finite sets}, except the proof of Lemma~\ref{lem:Y}, to which we turn now.

  \begin{proof}[Proof of Lemma~\ref{lem:Y}]
  We call an element $g\in G$ {\em switching} if for all non-identity $x \in X^2$ we have $g^{-1} x g\notin X^2$.
  \begin{claim}
    There exists at least one switching element $g_s\in G$.
  \end{claim}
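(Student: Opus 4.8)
The claim asserts that an ICC group $G$ contains at least one \emph{switching} element, i.e.\ an element $g_s$ such that $g_s^{-1} x g_s \notin X^2$ for every non-identity $x \in X^2$. The plan is to find such a $g_s$ one ``bad element'' at a time, exploiting the ICC hypothesis in the form: for every non-identity $w \in G$, the conjugacy class of $w$ is infinite, and moreover for any finite set $F$ one can conjugate $w$ out of $F$. More precisely, I would first record the following consequence of ICC: if $w \neq e$, then $\{a^{-1} w a : a \in G\}$ is infinite, so in particular there exists $a$ with $a^{-1} w a \notin X^2$ (since $X^2$ is finite).

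The core difficulty is that conjugating by a single $a$ to remove one bad element $x_1$ (so that $a^{-1}x_1 a \notin X^2$) may reintroduce another bad element $x_2$ (i.e.\ push some conjugate \emph{into} $X^2$). So a naive one-shot argument does not work, and one must be careful about the order of operations or argue more cleverly. The cleanest route I would pursue is a counting/probabilistic one, mirroring the spirit of the rest of the paper: enumerate the finitely many non-identity elements $x_1, \dots, x_m$ of $X^2$, and show that the set of ``bad'' conjugators for a fixed $x_i$ — namely $B_i = \{a \in G : a^{-1} x_i a \in X^2\}$ — is ``small'' in a suitable sense, while $G$ is ``large,'' so $G \setminus \bigcup_i B_i$ is nonempty. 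Concretely, for each $x_i \neq e$ and each target $y \in X^2$, the solution set $\{a : a^{-1} x_i a = y\}$ is either empty or a coset of the centralizer $C_G(x_i)$; since the conjugacy class of $x_i$ is infinite, $C_G(x_i)$ has infinite index, so this coset is a proper, ``thin'' subset of $G$. Thus each $B_i$ is a finite union of cosets of infinite-index subgroups, hence so is $\bigcup_i B_i$; a standard fact (a group is not a finite union of cosets of infinite-index subgroups — B.\,H.\ Neumann's lemma) then gives an element $g_s \in G \setminus \bigcup_i B_i$, which is exactly a switching element.

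The key steps, in order: (1) State the ICC consequence that non-identity elements have infinite-index centralizers. (2) For each non-identity $x \in X^2$, write $B_x = \{a \in G : a^{-1} x a \in X^2\} = \bigcup_{y \in X^2} \{a : a^{-1} x a = y\}$ and observe each inner set is empty or a left coset of $C_G(x)$, hence $B_x$ is a finite union of cosets of an infinite-index subgroup. (3) Take the finite union $B = \bigcup_{x \in X^2 \setminus\{e\}} B_x$, still a finite union of cosets of infinite-index subgroups. (4) Invoke B.\,H.\ Neumann's lemma (a group covered by finitely many cosets is covered by those among them whose subgroups have finite index) to conclude $B \neq G$, and pick any $g_s \notin B$. (5) Note that $g_s \notin B$ means precisely that for every non-identity $x \in X^2$, $g_s^{-1} x g_s \notin X^2$, i.e.\ $g_s$ is switching.

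The main obstacle I anticipate is simply invoking the right tool cleanly: one needs the fact that a group is never a finite union of cosets of subgroups of infinite index (Neumann). If one prefers to avoid citing this, an alternative is a direct greedy/inductive argument: handle the bad elements of $X^2$ one at a time, but when conjugating to fix $x_{k+1}$ use a conjugator drawn from deep enough inside the already-achieved good region — however this requires tracking how conjugation interacts across the list and is messier, so the Neumann-lemma route is cleaner. In either case the ICC property enters exactly through ``infinite conjugacy class $\Leftrightarrow$ infinite-index centralizer,'' which is where the hypothesis on $G$ is used.
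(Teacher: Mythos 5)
Your proposal is correct and is essentially identical to the paper's proof: both decompose the set of non-switching elements into finitely many cosets of the centralizers $C_G(x)$ for $x \in X^2\setminus\{e\}$, use ICC to conclude these centralizers have infinite index, and invoke B.~H.~Neumann's lemma that finitely many cosets of infinite-index subgroups cannot cover $G$.
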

  \begin{proof}
  Let $C_x$ be the centralizer of $x$ for each $x\in X^2$. Then there are finitely many cosets of $C_x$, say $g^x_1 C_x, \ldots, g^x_{n_x} C_x$, such that $g^{-1}xg\in X^2$ only if $g\in g^x_i C_x$ for some $i\in \{1,\ldots,n_x\}$.  So, non-switching elements are in the union of finitely many cosets of subgroups with infinite index, i.e.\ $g$ is non-switching only if $g\in g^x_i C_x$ for some $x\in X^2$ and some $i\in\{1,\ldots, n_x\}$. Since $G$ is ICC, each $C_x$ has infinite index in $G$. By ~\cite[Lemma 4.1]{neumann1954groups} a finite collection of cosets of infinite index does not cover the whole group $G$, so, there is at least one switching element in $G$.
  \end{proof}
  
  Let $g_s$ be a switching element. We can choose an arbitrarily large finite subset $Y_1\subseteq G$ which includes the identity and such that $Y_1\cap Y_1 g_s=\emptyset$. Choose such a $Y_1$ that is large enough so  that 
  $$(5 |Y_1|)^{200}(1-|X|^{-2})^{2|Y_1|/(20|X^2|+5)} < 1 \;\text{ and }\; |Y_1|\geq |X|$$ 
  and let $Y = (Y_1\cup Y_1 g_s) \cup (Y_1\cup Y_1 g_s)^{-1} \cup X$. Note that $Y$ is symmetric and  $5|Y_1| \geq |Y| \geq 2 |Y_1|$, which implies $$|Y|^{200} (1-|X|^{-2})^{|Y|/(20|X^2|+5)} < 1.$$ This establishes the first property of $Y$.
  
  Fix $g,h\in G$ with $g\neq h$. We say $y\in G$ is {\em distancing} for the pair $(g,h)$ if $gy$ and $hy$ are $X^2$-apart.
  \begin{claim}
  If $y\in G$ is not distancing for $(g,h)$ then $y g_s$ is distancing for $(g,h)$.  
  \end{claim}
  \begin{proof}
   Since $y$ is not distancing for $(g,h)$, $(g y)^{-1}(h y) = y^{-1}g^{-1}h y \in X^2$. By the definition of a switching element $g_s^{-1} [(g y)^{-1}(h y)] g_s = (g y g_s)^{-1}(h y g_s) \notin X^2$, which means that $y g_s$ is distancing for $(g,h)$.
  \end{proof}
    By this observation, if $y_1\in Y_1$ is not distancing for $(g,h)$ then $y_1 g_s\in Y_1 g_s$ is distancing for $(g,h)$. So at least half of the elements in $Y_1 \cup Y_1 g_s$ are distancing for $(g,h)$ and thus at least one fifth of the elements in $Y$ are distancing for $(g,h)$. Let $Y'_{g,h}$ be the collection of elements in $Y$ that are distancing for $(g,h)$. We just saw that $|Y'_{g,h}|\geq |Y|/5$.
  
  Now define a graph on $Y'_{g,h}$ by connecting $y_1\neq y_2\in Y'_{g,h}$ if $w_1$ and $w_2$ are not $X^2$-apart for some $w_1\in\{g y_1, h y_1\}, w_2\in\{g y_2, h y_2\}$. Call this graph $G'_{g,h}$. Note that the degree of each $y\in Y'_{g,h}$ in $G'_{g,h}$ is at most $4|X^2|$. So, we can find an independent set of size at least $|Y'_{g,h}|/(4|X^2|+1) \geq |Y|/(20|X^2|+5)$ in $G'_{g,h}$. Call this independent set $Y_{g,h}$.
  
  \begin{claim}
    $|Y_{g,h}| \geq |Y|/(20|X^2|+5)$, for $y\in Y_{g,h}$, $g y$ and $h y$ are $X^2$-apart, and for $y_1\neq y_2\in Y_{g,h}$, we have that $w_1$ and $w_2$ are $X^2$-apart for $w_1\in\{g y_1, h y_1\}, w_2\in\{g y_2, h y_2\}$.
  \end{claim}
  \begin{proof}
     The bound on the size of $Y_{g,h}$ is established in the previous paragraph. Since $Y_{g,h}\subseteq Y'_{g,h}$ and all elements of $Y'_{g,h}$ are distancing for $(g,h)$, the second property holds. The third property follows from independence of $Y_{g,h}$ in $G'_{g,h}$.
  \end{proof}
  This establishes the three properties of $Y_{g,h}$, and thus concludes the proof of the lemma.
  \end{proof}

\subsection{Saturdated packings}
In this section we prove some general claims regarding {\em saturated packings} (see, e.g., \cite{toth1993packing}).

  \begin{definition}
    Let $Z_1,\ldots,Z_n$ be distinct non-empty finite subsets of $G$. A {\em $\{Z_1,\ldots,Z_n\}$-packing} is a $p\in \{Z_1,\ldots,Z_n,\emptyset\}^G$ with $h\:p(h)\cap g\:p(g)=\emptyset$ for all $g\neq h\in G$; note that $h\:p(h)$ and $g\:p(g)$ are each a translate, by $h$ and $g$ respectively, of some element of $\{Z_1,\ldots,Z_n,\emptyset\}$. When $p(g) \neq \emptyset$, we call the translate $g\:p(g)$ a {\em block}.
  \end{definition}
    By an abuse of notation, we use the term $Z$-packing instead of $\{Z\}$-packing when we have only one subset.

  \begin{definition}
    A  $\{Z_1,\ldots,Z_n\}$-packing $p$ is {\em saturated} if there is no $\{Z_1,\ldots,Z_n\}$-packing $p'\neq p$ such that $p(g)\neq \emptyset$ implies $p'(g)=p(g)$.
    
    We say that $p$ is a saturation of $q$ if $p$ is saturated and $q(g)\neq\emptyset$ implies $p(g)=q(g)$.
  \end{definition}
  Saturated packings are packings to which one cannot add any blocks. Note, however, that it may be possible to add more blocks  by first removing some. Note also that by Zorn's Lemma there exists for each $\{Z_1,\ldots,Z_n\}$-packing $q$ a $\{Z_1,\ldots,Z_n\}$-packing $p$ that saturates it.

  The following claim shows the existence of strongly irreducible saturated packings, which will be useful in the construction of strongly irreducible $X$-witness shifts.
  A similar claim with a similar proof appears in~\cite[Lemma 2.2]{frisch2017symbolic}. 

  Given two distinct non-empty finite subsets $Z_1$ and $Z_2$ of $G$, we denote by $\pi \colon \{Z_1, Z_2, \emptyset\}^G \to \{Z_1, \emptyset\}^G$ the map 
  $$
    (\pi(p))(g) = 
    \begin{cases}
      Z_1 & \text{if } p(g) = Z_1\\
      \emptyset & \text{otherwise.}
    \end{cases}
  $$
  That is, $\pi$ transforms a $\{Z_1,Z_2\}$-packing into a $Z_1$-packing by removing all the $Z_2$-blocks.
  \begin{claim}
  \label{clm:saturated-packings}
    Let $Z_1$ and $Z_2$ be two distinct non-empty finite subsets of $G$. Let $P$ be the collection of all saturated $\{Z_1,Z_2\}$-packings $p$ such that  $\pi(p)$ is a saturated $Z_1$-packing.
    Then $P$ is a non-empty strongly irreducible shift.
  \end{claim}
  \begin{proof}
    The proof of the fact that $P$ is a non-empty shift is standard. It thus remains to be shown that it is strongly irreducible.
    
    Let $X = (Z_1\cup Z_2) \cup (Z_1\cup Z_2)^{-1}$. Let $F_1, F_2\subset G$ be any two subsets of $G$ that are $X^{14}$-apart. To prove the claim, it suffices to show that for any $p_1,p_2 \in P$ there is a $p \in P$ that agrees with $p_1$ on $F_1$ and with $p_2$ on $F_2$. We know that $F_1 X^6$ and $F_2 X^6$ are disjoint, and furthermore, if $a_1 \in F_1 X^6$ and $a_2 \in F_2 X^6$ then the blocks $a_1 p_1(a_1)$ and $a_2 p_2(a_2)$ are disjoint.
    
    Let $q_1 = \pi(p_1)$ and $q_2 = \pi(p_2)$. We know that $q_1$ and $q_2$ are saturated $Z_1$-packings. We also know that if $a_1\in F_1 X^6$ and $a_2\in F_2 X^6$ then $a_1 q_1(a_1)$ and $a_2 q_2(a_2)$ are disjoint. Thus there is a $Z_1$-packing, say $q'$, that is equal to $q_1$ on $F_1 X^6$ and to $q_2$ on $F_2 X^6$. Let $q$ be a $Z_1$-packing that is a saturation of $q'$. Fix $i\in\{1,2\}$ and $g\in F_i X^4$. We will show that $q_i(g) = q(g)$.
    \begin{itemize}
        \item If $q_i(g) = Z_1$, we know that $q'(g) = Z_1$, and hence $q(g) = Z_1$.
        \item If $q_i(g) = \emptyset$, since $q_i$ is a saturated $Z_1$-packing, there exists $a\in g Z_1 Z_1^{-1} \subseteq F_i X^6$ with $q_i(a) = Z_1$. So, $q'(a) = Z_1$, and hence $q(a) = Z_1$. Since $gZ_1\cap aZ_1\neq\emptyset$, this implies that $q(g) = \emptyset$.
    \end{itemize}
    So, $q$ is a saturated $Z_1$-packing that agrees with $q_1$ on $F_1 X^4$ and with $q_2$ on $F_2 X^4$.
    
    Since $q$ agrees with $q_1 = \pi(p_1)$ on $F_1 X^4$ and with $q_2 = \pi(p_2)$ on $F_2 X^4$, it is easy to see that $p'$, which is defined as follows, is a well-defined $\{Z_1,Z_2\}$-packing:
      $$
        p'(g) = 
        \begin{cases}
          p_1(g) & \text{if } g\in F_1 X^2\\
          p_2(g) & \text{if } g\in F_2 X^2\\
          q(g) & \text{otherwise.}
        \end{cases}
      $$
    So, by definition, $p'$ agrees with $p_1$ on $F_1 X^2$ and with $p_2$ on $F_2 X^2$. Furthermore, $\pi(p')=q$, since $\pi(p_i)$ agrees with $q$ on $F_i X^2$. 
    
    Let $p$ be a $\{Z_1,Z_2\}$-packing that is a saturation of $p'$. Since $\pi(p') = q$ is a saturated $Z_1$-packing, we have $\pi(p) = \pi(p') = q$. So, $p$ is a saturated $\{Z_1,Z_2\}$-packing where $\pi(p)$ is a saturated $Z_1$-packing, which means $p\in P$. To complete the proof, we just need to show that $p$ agrees with $p_1$ on $F_1$ and with $p_2$ on $F_2$. Fix $i\in\{1,2\}$ and $g\in F_i$. We will show that $p_i(g) = p(g)$.
    \begin{itemize}
        \item If $p_i(g)\in \{Z_1, Z_2\}$, we know that $p'(g) = p_i(g) \in \{Z_1,Z_2\}$, and hence $p(g) = p_i(g)$.
        \item If $p_i(g) = \emptyset$, since $p_i$ is a saturated $\{Z_1,Z_2\}$-packing, for any $j\in\{1,2\}$ there exist $\ell\in\{1,2\}$ and $a\in g Z_j Z_\ell^{-1} \subseteq F_i X^2$ with $p_i(a) = Z_\ell$. So, $p'(a) = Z_\ell$, and hence $p(a) = Z_\ell$. Since $g Z_j\cap a Z_\ell\neq\emptyset$, this implies that $p(g) = \emptyset$.
    \end{itemize}
  \end{proof}
  
  %In particular, for any non-empty finite subset $Z\subset G$, by setting $Z_1 = Z_2 = Z$, this claim implies that the set of all saturated $Z$-packings is a non-empty strongly irreducible shift.

\subsection{Proof of Proposition~\ref{prop:r-witness si}}

  We can now start the proof of proposition~\ref{prop:r-witness si}. Assume that $X$, a finite symmetric subset of $G$, is given. We now seek to construct a strongly irreducible $X$-witness shift $T$. Since $G$ satisfies proposition~\ref{prop:r-witness for finite sets}, we can  let $Y$ and $s$ be a finite symmetric subset of $G$ and a configuration on $G$ that satisfy the statement of proposition~\ref{prop:r-witness for finite sets} for $X\subseteq G$.

  Let $P$ be the strongly irreducible shift given by Claim~\ref{clm:saturated-packings} for $Z_1 = Y^{100}X$ and $Z_2 = YX$.
  
  Define $\psi: P\to 2^G$ by
  $$
  [\psi(p)](g) =
  \begin{cases}
    s(h^{-1} g) & \text{if } g\in h\:Y^{100} \text{ for some } h\in G\\
                & \quad \text{with } p(h) = Y^{100}X \\
    s(h^{-1} g) & \text{if } g\in h\:Y \text{ for some } h\in G\\
                & \quad \text{with } p(h) = YX \\
    0 & \text{ otherwise. }
  \end{cases}
  $$
   What $\psi$ does is produce a configuration  which is 0 outside of the $X$-interior\footnote{The $X$-interiors of $Y^{100}X$ and $YX$ are $Y^{100}$ and $Y$.} of blocks, and is equal to translates of $s|_{Y^{100}}$ and $s|_Y$ inside the interior of blocks.
  
  It is again easy to see that $\psi$ is continuous and equivariant, so $T=\psi(P)$ is a strongly irreducible shift. The following claim completes the proof of proposition~\ref{prop:r-witness si}.

  \begin{claim}
    $T$ is an $X$-witness shift.
  \end{claim}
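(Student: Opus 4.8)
The plan is to check the two defining conditions of an $X$-witness shift directly for $T=\psi(P)$, using the explicit forms of $\phi$ and $\psi$ together with the two properties of $s$ from Proposition~\ref{prop:r-witness for finite sets}. Throughout I would use that for $p\in P$ the configuration $\psi(p)$ is $0$ off the $X$-interiors of the blocks of $p$, that on the interior of a block $h\cdot p(h)$ (which is $hY^{100}$ or $hY$ according as $p(h)=Y^{100}X$ or $YX$) it equals $g\mapsto s(h^{-1}g)$, and that distinct blocks of a packing, and hence their interiors, are disjoint.

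Condition (1) is the easy one. If $\psi(p)(a)=\psi(p)(b)=1$, then $a$ lies in the interior of a block $h\cdot p(h)$ with $s(h^{-1}a)=1$ and $b$ in the interior of a block $h'\cdot p(h')$ with $s(h'^{-1}b)=1$. If it is the same block, Proposition~\ref{prop:r-witness for finite sets}(1) gives that $h^{-1}a$ and $h^{-1}b$ are $X$-apart, hence so are $a$ and $b$. If the blocks are distinct, then $a$ and $b$ are automatically $X$-apart: otherwise $b=ax$ with $x\in X$, and since $a$ lies in the interior of $h\cdot p(h)$ we get $h^{-1}b=h^{-1}ax\in p(h)$, so $b\in h\cdot p(h)$ while also $b\in h'\cdot p(h')$, contradicting that distinct blocks are disjoint.

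Condition (2) is the real content. Write $t=\psi(p)$, $t'=\psi(p')$, with $p=\phi(p_1,p_2)$ and $p'=\phi(p_1',p_2')$. First, since $p_1$ is a nonempty saturated $Y^{100}X$-packing, $p$ has a big block $B=h_1\,Y^{100}X$. Second, since $p'$ is saturated, the translate $h_1\,YX$ (a legal $YX$-block position) must intersect some block $B'=h_2\,Z'$ of $p'$, with $Z'\in\{Y^{100}X,YX\}$; following the intersection point and using $X\subseteq Y$ (and the elementary inclusions among powers of $Y$, recalling $e\in Y$) gives $h_2^{-1}h_1\in Y^{103}$ always, and $h_2^{-1}h_1\in Y^{4}$ if $B'$ is a small block. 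Now on the interior of $B$, $t$ is a translate of $s|_{Y^{100}}$, and on the interior of $B'$, $t'$ is a translate of $s|_{Y^{100}}$ or of $s|_Y$, so I would produce a common $1$ using Proposition~\ref{prop:r-witness for finite sets}(2). Setting $d=h_2^{-1}h_1$: if $B'$ is big, write $d=hg^{-1}$ with $g,h\in Y^{99}$ (possible since $d\in Y^{103}\subseteq Y^{99}Y^{99}$), take $a'\in Y$ with $s(ga')=s(ha')=1$, and put $a=h_1ga'$, so that $h_1^{-1}a=ga'\in Y^{100}$ and $h_2^{-1}a=ha'\in Y^{100}$, whence $t(a)=t'(a)=1$; if $B'$ is small, take $a'\in Y$ with $s(a')=s(d^{-1}a')=1$ (applying Proposition~\ref{prop:r-witness for finite sets}(2) to $e$ and $d^{-1}\in Y^{4}\subseteq Y^{100}$), and put $a=h_2a'$, so that $h_2^{-1}a=a'\in Y$ and $h_1^{-1}a=d^{-1}a'\in Y^{5}\subseteq Y^{100}$, again giving $t(a)=t'(a)=1$.

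The only genuinely delicate point, I expect, is this last exponent bookkeeping: the argument for condition (2) closes precisely because saturation of $p'$ places a block of $p'$ within a bounded translate ($Y^{103}$) of the anchor of a big block of $p$, and because the block size $Y^{100}X$ is large enough — relative to the exponents occurring in Proposition~\ref{prop:r-witness for finite sets}(2) — for the common $1$ it furnishes to land inside the truncations $s|_{Y^{100}}$ and $s|_Y$ that $\psi$ actually reads. Verifying that the powers of $Y$ fit together (e.g. that $Y^{103}\subseteq Y^{99}Y^{99}$, which is exactly why the exponent $100$ must be taken this large rather than something like $4$) is the fiddly step; the remainder is routine unwinding of the definitions of $\phi$, $\psi$, and (saturated) packings.
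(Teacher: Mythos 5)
Your proof is correct and follows essentially the same route as the paper's: condition (1) via disjointness of blocks together with property (1) of $s$, and condition (2) by locating a big block of $p$, invoking saturation of $p'$ to find a block of $p'$ within a bounded $Y$-power of it, and then applying property (2) of Proposition~\ref{prop:r-witness for finite sets}. The only difference is cosmetic bookkeeping — you track the block centers $h_1,h_2$ directly where the paper translates twice so that the relevant blocks sit at the identity — and your exponent estimates ($Y^{4}$ and $Y^{103}$, matching the paper's $k_1^{-1}\in Y^4$, $k_2^{-1}\in Y^{99}$) all check out.
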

  
  The claim follows immediately from the following two lemmas.
  The first of the lemmas is straightforward from our construction, while the second is less immediate.
  
  \begin{lemma}
    For all $t\in T$, the $1$'s in $t$ are $X$-apart.
  \end{lemma}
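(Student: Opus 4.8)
The plan is to unwind the definitions of $\psi$ and the packing $P$, and to show that any two $1$'s in a configuration $t = \psi(p)$ for $p \in P$ arise as translates of $1$'s of $s$ sitting inside blocks, and that these are necessarily $X$-apart. First I would observe that, by equivariance, it suffices to treat a single $t = \psi(p)$ with $p \in P$; we must show that if $t(a) = 1$ and $t(b) = 1$ then $a^{-1}b \notin X$. By the definition of $\psi$, $t(a) = 1$ forces $a$ to lie in $h_1 Y^{100}$ for some $h_1$ with $p(h_1) = Y^{100}X$, or in $h_1 Y$ for some $h_1$ with $p(h_1) = YX$, and in either case $s(h_1^{-1} a) = 1$; similarly $b \in h_2 Y^{100}$ or $b \in h_2 Y$ with $s(h_2^{-1}b) = 1$, for an appropriate $h_2$. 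Write $a = h_1 y_1$, $b = h_2 y_2$ with $y_1, y_2 \in Y^{100}$ (using $Y \subseteq Y^{100}$), so $s(y_1) = s(y_2) = 1$.

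There are two cases. If $h_1 = h_2 =: h$, then $s(y_1) = s(y_2) = 1$ with $y_1, y_2$ in the support of $s$, so by the first property in Proposition~\ref{prop:r-witness for finite sets}, $y_1$ and $y_2$ are $X$-apart, i.e. $y_1^{-1} y_2 \notin X$; but $a^{-1} b = y_1^{-1} h^{-1} h y_2 = y_1^{-1} y_2$, so $a$ and $b$ are $X$-apart. If $h_1 \neq h_2$, then I would use that $p$ is a packing: the blocks $h_1\,p(h_1)$ and $h_2\,p(h_2)$ are disjoint, and each of these blocks contains $h_1 Y^{100}$ (resp. $h_2 Y^{100}$), or at least $h_1 Y$ (resp.\ $h_2 Y$) — in all four combinations the block $h_i\,p(h_i)$ contains $h_i Y X$, hence contains $h_i y_i X$ since $y_i \in Y$ wait, $y_i \in Y^{100}$, so I should be slightly more careful here and note instead that the block equals $h_i Y^{100} X$ or $h_i Y X$, and in either case contains $a = h_1 y_1$ together with $a X = h_1 y_1 X$ provided $y_1 X \subseteq Y^{100} X$, which holds since $y_1 \in Y^{100}$. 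Then $aX \subseteq h_1\,p(h_1)$ and $b \in h_2\,p(h_2)$, and disjointness of the blocks gives $b \notin aX$, i.e. $a^{-1}b \notin X$, so again $a$ and $b$ are $X$-apart.

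I expect the main (modest) obstacle to be bookkeeping: making sure that in every combination of the two cases of the $\psi$-definition (block of type $Y^{100}X$ vs.\ type $YX$), the relevant block genuinely contains $a X$, which boils down to the elementary inclusions $Y X \subseteq Y^{100} X$ and $y\, X \subseteq Y^{100} X$ for $y \in Y^{100}$ (and similarly with $Y$ in place of $Y^{100}$). Once these inclusions are in hand the disjointness of blocks in a packing does all the work in the $h_1 \neq h_2$ case, and Proposition~\ref{prop:r-witness for finite sets}(1) handles the $h_1 = h_2$ case; combining the two completes the proof.
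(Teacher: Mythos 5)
Your proof is correct and follows essentially the same route as the paper's: split into the case where $a$ and $b$ lie in the same block (handled by part (1) of Proposition~\ref{prop:r-witness for finite sets} applied to $h^{-1}a$ and $h^{-1}b$) and the case of distinct blocks (handled by disjointness of blocks plus the fact that a point in the $X$-interior of a block has its whole $X$-translate inside that block). The paper phrases the second case in terms of the $X$-interiors of disjoint blocks being $X$-apart, which is exactly your inclusion $aX \subseteq h_1\,p(h_1)$.
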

  \begin{proof}
    Let $t\in T$ and $a,b\in G$ with $t(a)=t(b)=1$. Since $t\in T$, there is a $p\in P$ with $\psi(p)=t$. By the definition of $\psi$, since $[\psi(p)](a)=[\psi(p)](b)=1$, we get that $a\in h\:p(h)$ and $b\in g\:p(g)$ for some $h,g\in G$. If $g=h$, i.e.\ $a$ and $b$ are in the same block of $p$, then $s(h^{-1}a)=t(a)=1$ and $s(h^{-1}b)=t(b)=1$. But, since $s$ satisfies proposition~\ref{prop:r-witness for finite sets} (in particular, the $1$'s in $s$ are $X$-apart), $h^{-1}a$ and $h^{-1}b$ are $X$-apart, which implies $a$ and $b$ are $X$-apart. If $g\neq h$, then $h\:p(h)$ and $g\:p(g)$ are disjoint, so the $X$-interior of $h\:p(h)$ and the $X$-interior of $g\:p(g)$ are $X$-apart. We also know that $a$ is in the $X$-interior of $h\:p(h)$ and $b$ is in the $X$-interior of $g\:p(g)$. Therefore, $a$ and $b$ are $X$-apart.
  \end{proof}
  
  \begin{lemma}
    For any $t_1, t_2\in T$ there is an $a\in G$ with $t_1(a)=t_2(a)=1$.
  \end{lemma}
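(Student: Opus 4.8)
The plan is to exploit two features of the construction: every $p\in P$ necessarily contains a large (i.e.\ $Y^{100}X$-type) block, and, since the elements of $P$ are \emph{saturated} $\{Y^{100}X,YX\}$-packings, near the base of such a large block of $p_1$ there must sit some block of $p_2$. The $X$-interiors of these two overlapping blocks then form a region large enough that property~(2) of Proposition~\ref{prop:r-witness for finite sets} hands us a point where the two relevant translates of $s$ are both $1$.

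Concretely, write $t_i=\psi(p_i)$ with $p_i\in P$. First I would note that $p_1$ has a $Y^{100}X$-block: $p_1=\phi(q_1,r_1)$ with $q_1$ a saturated $Y^{100}X$-packing, which is non-empty (the empty packing is never saturated, as $Y^{100}X$ is finite), and $\phi$ keeps every block of $q_1$. Fix such a block, at base $h_1$, so $t_1(h_1\eta)=s(\eta)$ for all $\eta\in Y^{100}$. Next, since $p_2$ is a saturated $\{Y^{100}X,YX\}$-packing one cannot adjoin a $YX$-block at $h_1$, so $h_1(YX)$ meets some block $h_2\,p_2(h_2)$ of $p_2$; writing this intersection out and using $X\subseteq Y$ and symmetry of $X$ and $Y$ gives $h_2=h_1v_0$ with $v_0\in Y^{4}$ when $p_2(h_2)=YX$ and $v_0\in Y^{103}$ when $p_2(h_2)=Y^{100}X$. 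In either case the $X$-interior of the $p_2$-block at $h_2$ is $Y$ or $Y^{100}$ and $t_2(h_2\xi)=s(\xi)$ there; since $Y^{k}\subseteq Y^{100}$ for every $k\le100$, the base $h_1$ is deep inside $h_1Y^{100}$, which is what keeps this nearby $p_2$-block inside the interior of $p_1$'s large block.

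Finally I would produce the common $1$ using property~(2) of Proposition~\ref{prop:r-witness for finite sets}. If $p_2(h_2)=YX$: apply it with $g=v_0$ and $h=e$ (both in $Y^{100}$) to get $a\in Y$ with $s(v_0a)=s(a)=1$, and set $v:=h_1v_0a=h_2a$; then $t_1(v)=s(v_0a)=1$ since $v_0a\in Y^{5}\subseteq Y^{100}$, and $t_2(v)=s(a)=1$ since $a\in Y$. If $p_2(h_2)=Y^{100}X$: factor $v_0=\delta_1\delta_2$ with $\delta_1\in Y^{52}$, $\delta_2\in Y^{51}$, apply property~(2) with $g=\delta_1$ and $h=\delta_2^{-1}$ to get $a\in Y$ with $s(\delta_1a)=s(\delta_2^{-1}a)=1$, and set $v:=h_1\delta_1a=h_2\delta_2^{-1}a$; then $\delta_1a\in Y^{53}$ and $\delta_2^{-1}a\in Y^{52}$ both lie in $Y^{100}$, so $t_1(v)=s(\delta_1a)=1$ and $t_2(v)=s(\delta_2^{-1}a)=1$. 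In all cases $v$ is a common $1$.

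I expect the middle step to be the real obstacle: correctly positioning a block of $p_2$ relative to the large block of $p_1$. The conceptual idea---use saturation of $p_2$ (not of $p_1$), applied at the base of a large block of $p_1$---is short once found, but one must track powers of $Y$ carefully so that all the points produced land in the $X$-interiors; this is precisely why the exponent $100$ in Proposition~\ref{prop:r-witness for finite sets} is taken with room to spare. Note that property~(1) of that proposition (the $1$'s of $s$ being $X$-apart) plays no role here; only property~(2) together with saturation is needed.
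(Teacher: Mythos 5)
Your proof is correct and takes essentially the same route as the paper's: fix a $Y^{100}X$-block of $p_1$, use saturation of $p_2$ to find a $p_2$-block whose base is a bounded $Y$-distance from that block's base, and then apply property~(2) of Proposition~\ref{prop:r-witness for finite sets} to two suitably chosen elements of $Y^{100}$. The only difference is bookkeeping: the paper translates twice so that both offsets land in $Y^{99}$ (choosing a point $a_2\in Y^4$ inside the interior of the relevant $p_2$-block), whereas you keep the block bases and split $v_0\in Y^{103}$ as $\delta_1\delta_2$ --- an equally valid way to keep all the produced points inside the $X$-interiors.
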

  \begin{proof}
    We essentially prove this lemma by a series of reductions.
    
    Let $t_1, t_2\in T$. So there are $p_1, p_2\in P$ with $\psi(p_1)=t_1$ and $\psi(p_2)=t_2$. Pick an $a_1\in G$ with $p_1(a_1)=Y^{100}X$. This means that $a_1^{-1}p_1$ has a block of shape $Y^{100}X$ centered at the identity. Let $p_1'=a_1^{-1}p_1$, $p_2'=a_1^{-1}p_2$, and let $t_1'=\psi(p_1')$, $t_2'=\psi(p_2')$. So, $p_1'$ has a block of shape $Y^{100}X$ centered at the identity.
    
    Since $p_2'$ is saturated, we know there is an $a_2\in Y^4$ such that either (I) $a_2$ is in the $YX$-interior of a block of shape $Y^{100}X$ in $p_2'$, or (II) $a_2$ is the center of a block of shape $YX$ in $p_2'$. Let $p_1''=a_2^{-1}p_1'$, $p_2''=a_2^{-1}p_2'$, and let $t_1''=\psi(p_1'')$, $t_2''=\psi(p_2'')$. Observe that in $p_1''$ the identity is in the $YX$-interior of a block of shape $Y^{100}X$, say $e\in k_1 Y^{99}$ for some $k_1\in G$ with $p_1''(k_1)=Y^{100}X$. Moreover, in $p_2''$ the identity is either (I) in the $YX$-interior of a block of shape $Y^{100}X$ or (II) in the center of a block of shape $YX$.
    
    \begin{figure}
    \includegraphics[scale=0.6,center]{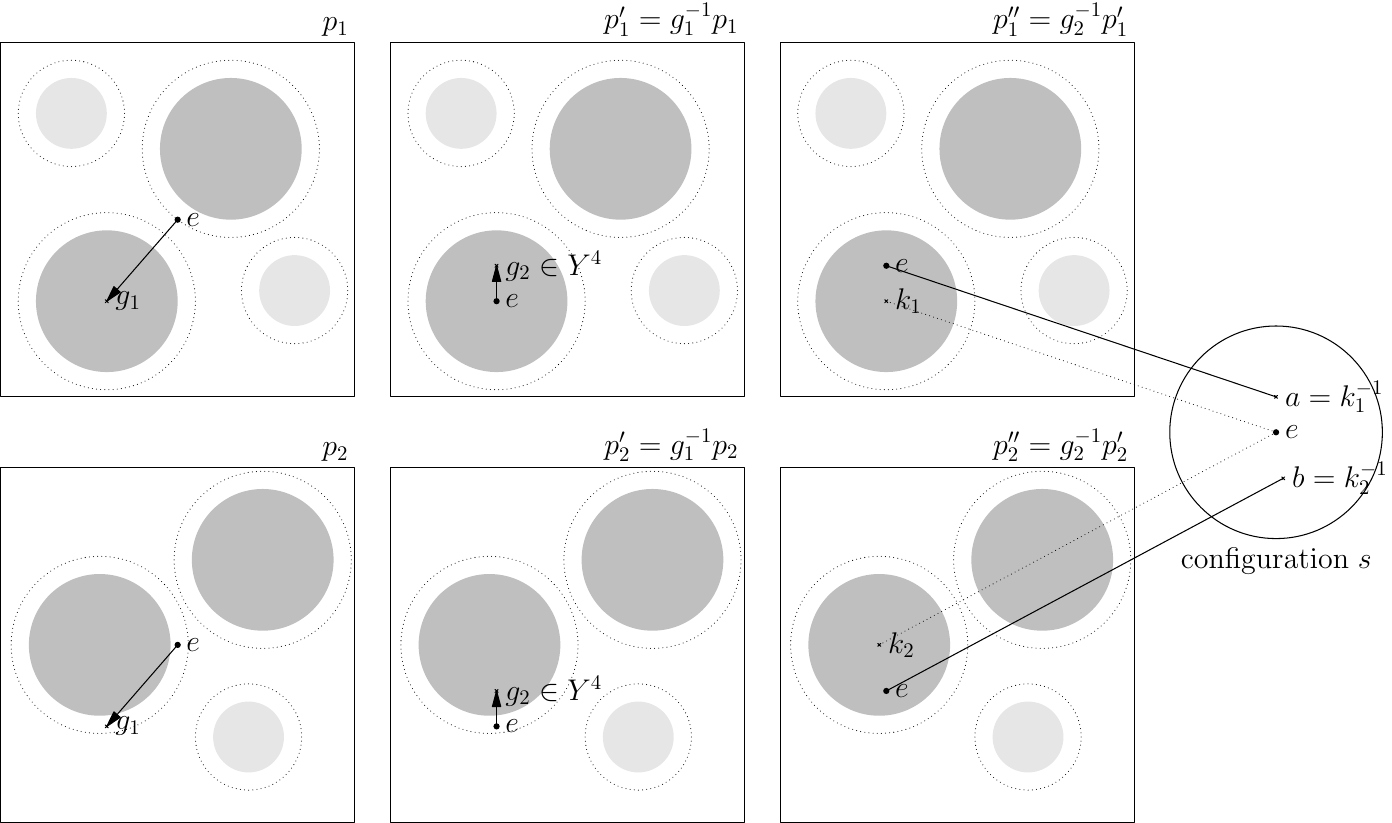}
      \caption{\label{fig:caseI}Case (I).}
    \end{figure}
    In case (I), since in $p_2''$ the identity is in the $YX$-interior of a block of shape $Y^{100}X$, $e\in k_2\:Y^{99}$ for some $k_2\in G$ with $p_2''(k_2)=Y^{100}X$. So $k_2^{-1}\in Y^{99}$. By the second part of proposition~\ref{prop:r-witness for finite sets} applied to $g=k_1^{-1}$ and $h=k_2^{-1}$, we know that there is an $a_3\in Y$ such that $s(k_1^{-1}a_3) = s(k_2^{-1} a_3) = 1$. So, by the definition of $\psi$, the fact that $k_1^{-1}a_3\in Y^{100}$, and the fact that $p_1''(k_1)=Y^{100}X$, we get $t_1''(a_3) = s(k_1^{-1} a_3) = 1$, and similarly, we get $t_2''(a_3) = s(k_2^{-1} a_3) =1$. Therefore, $t_1(a_1 a_2 a_3) = t_1''(a_3) = 1$ and $t_2(a_1 a_2 a_3) = t_2''(a_3) = 1$. Case (I) is schematically depicted in Figure~\ref{fig:caseI}.
    
    \begin{figure}
      \includegraphics[scale=0.6,center]{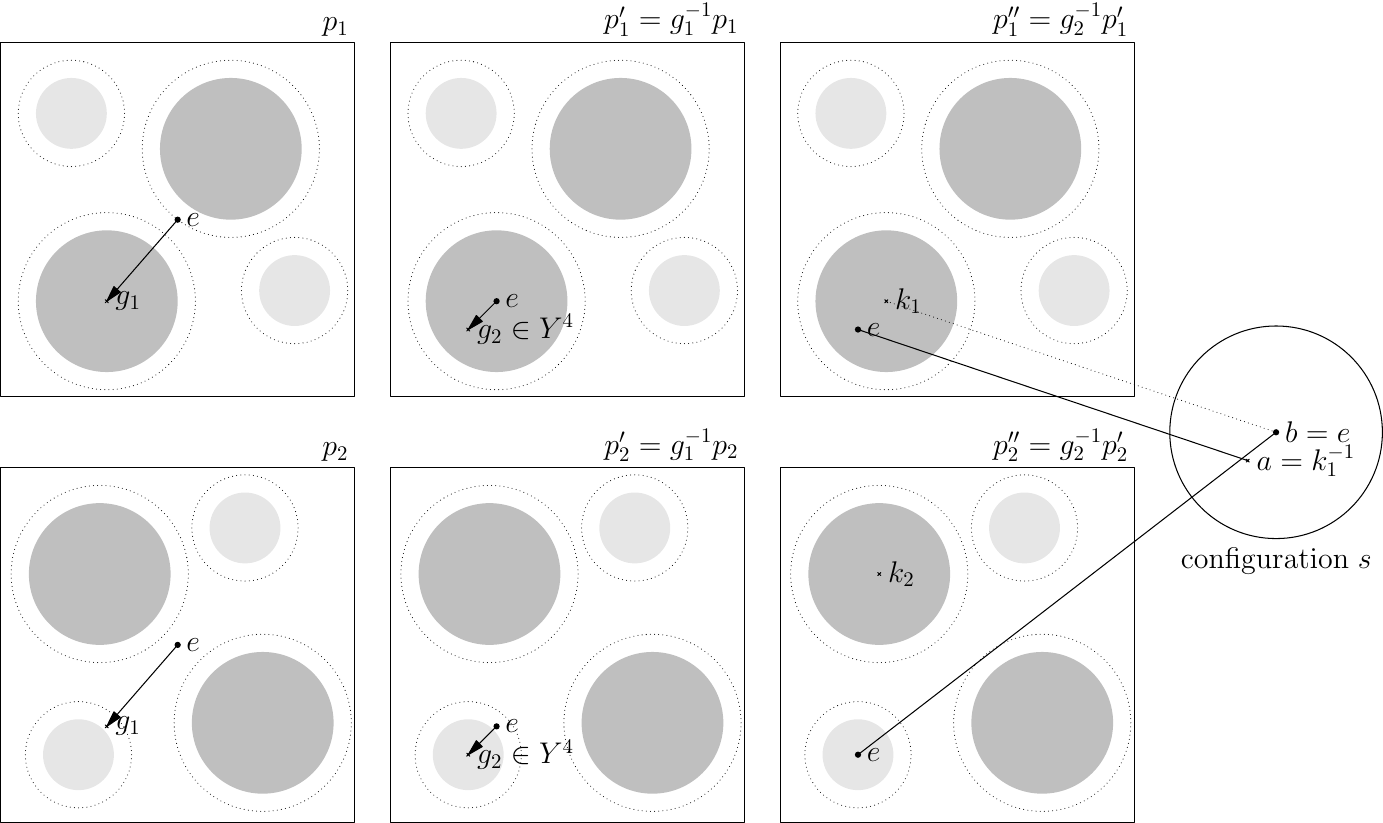}
      \caption{\label{fig:caseII}Case (II).}
    \end{figure}
    In case (II), $p_2''(e)=YX$. Again, if we apply the second part of proposition~\ref{prop:r-witness for finite sets} to $g=k_1^{-1}$ and $h=e$, we get that there is an $a_3\in Y$ such that $s(k_1^{-1}a_3) = s(a_3) = 1$. So, by the definition of $\psi$, the fact that $k_1^{-1}a_3\in Y^{100}$, and the fact that $p_1''(k_1)=Y^{100}X$, we get $t_1''(a_3) = s(k_1^{-1} a_3) = 1$. Also, by the definition of $\psi$, the fact that $a_3\in Y$, and the fact that $p_2''(e)=YX$, we get $t_2''(a_3)=s(a_3)=1$. Therefore, $t_1(a_1 a_2 a_3)=t_1''(a_3)=1$ and $t_2(a_1 a_2 a_3) = t_2''(a_3) = 1$. Case (II) is schematically depicted in Figure~\ref{fig:caseII}.
    
    In both cases we showed there is an $a\in G$ with $t_1(a)=t_2(a)=1$. This completes the proof.
  \end{proof}

\subsection{Proof of Proposition~\ref{prop:eps-proximal approx}}
  Fix $\epsilon>0$ and $X$ a finite symmetric subset of $G$. Without loss of generality, we may assume that $X$  includes the identity, and is large enough so that any two configurations that agree on $X$ have distance less than $\eps$.
  
  Since $T\subseteq A^G$ is a strongly irreducible shift, there is a finite symmetric $U\subseteq G$ including the identity such that for any two subsets $E_1,E_2 \subseteq G$ with $E_1 U \cap E_2 U = \emptyset$ and any two configurations $t_1,t_2\in T$ there is a configuration $t\in T$ such that $t$ restricted to $E_1$ equals $t_1$ restricted to $E_1$, and $t$ restricted  to $E_2$ equals $t_2$ restricted to $E_2$
  
  Given a shift $S \subseteq A^G$ and a finite $Y \subset G$, we call a map $p \colon Y \to A$ a {\em $Y$-pattern of $S$} if it is equal to  $s|_Y$, the restriction of some $s \in S$ to $Y$. In this case we say that $s$ contains the $Y$-pattern $p$.
  
  By strong irreducibility of $T$, we can find a $u\in T$ whose orbit $\{g u\,:\, g \in G\}$ contains all the $X$-patterns of $T$. Furthermore, since there are only finitely many $X$-patterns in $T$, there must be a finite $V \subset G$ (which we assume w.l.o.g.\ to be symmetric and contain the identity) such that $\{g u\,:\, g \in \text{ the } X\text{-interior of } V\}$ contains all the $X$-patterns of $T$. By making $V$ even larger, we can assume that $d(t,t') < \eps$ for any two configurations $t,t' \in T$ that agree on $V$, where $d(\cdot,\cdot)$ is the metric on $T$.  
  
   Let $Z=(VU^2X)(VU^2X)^{-1}$. By the assumption in the statement, there is a strongly irreducible $Z$-witness shift for $G$. Call this shift $S$.
   
  Now, define a continuous equivariant function $\phi: S\times T \to A^G$. Let $s\in S, t\in T$. Let $t'=\phi(s,t)$ be defined as follows, in the following cases:
  \begin{enumerate}
  
      \item $g = k h$ for some $k\in G$ with $s(k)=1$ and some $h\in V$:
      
      In this case, let $t'(g) = u(h)$.
      
      \item $g = k h$ for some $k\in G$ with $s(k)=1$ and some $h\in VU^2\setminus V$:
      
      In this case let $E_1=kV$ and $E_2 = kVU^2X^2\setminus k VU^2$. Since $E_1 U\cap E_2 U=\emptyset$, there is a $v\in T$ with $v|_{E_1} = (ku)|_{E_1}$ and $v|_{E_2} = t|_{E_2}$. If there are multiple choices for $v$, choose the $v$ such that the restriction of $k^{-1}v$ to $F = VU^2X^2\setminus VU^2$ is lexicographically least for a fixed ordering of $F$ and a fixed ordering of $A$. Let $t'(g) = v(kh)$.
      
      \item $g\neq kh$ for $s(k)=1$ and $h\in VU^2$:
      
      In this case, let $t'(g) = t(g)$.
  \end{enumerate}
  Since the $1$'s in $S$ are $Z$-apart, this leads to a well-defined definition for $t'$. Informally, $t'$ is constructed from $t$ as follows: the configuration $t'$ mostly agrees with $t$. The first exceptions are the $V$-neighborhoods of any $k \in G$ such that $s(k)=1$, where we set $t'$ to equal the pattern that appears around the origin in $u$. The second exceptions are the borders of these $V$-neighborhoods, where some adjustments need to be made so that---as we explain below---$t'$ and $t$ agree on any translate of $X$.  This construction is schematically depicted in Figure~\ref{fig:phi eps-prox approx}.
  
  \begin{figure}[t]
    \includegraphics[scale=0.8,center]{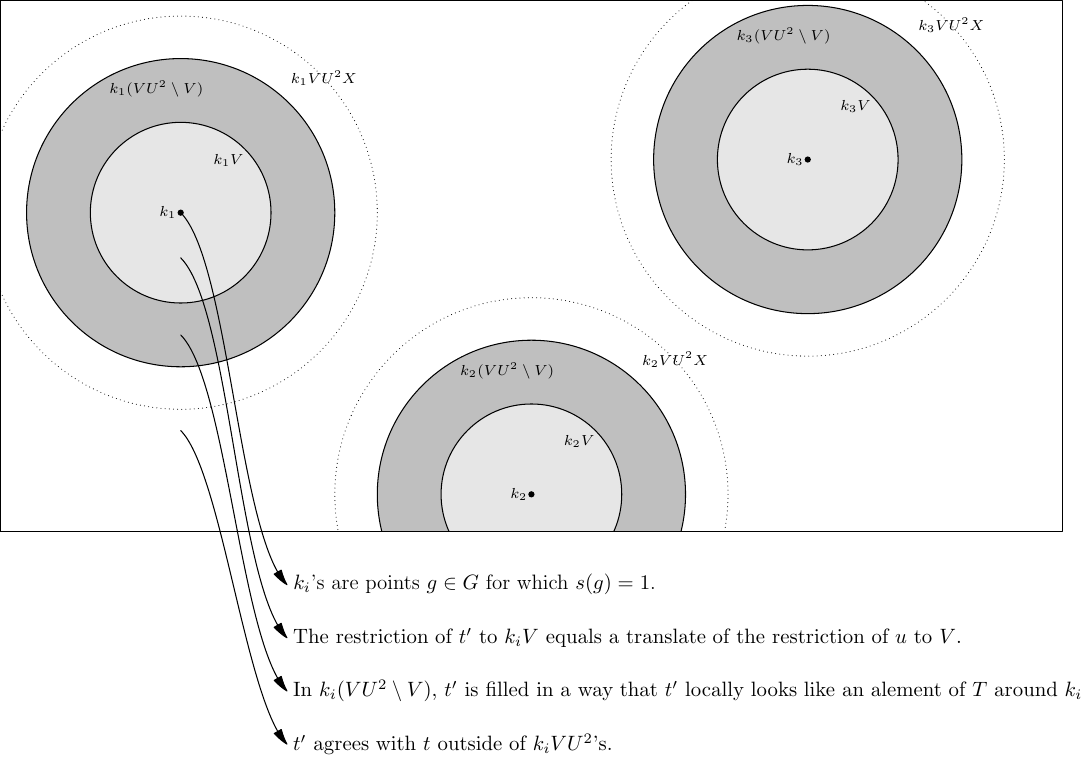}
    \caption{\label{fig:phi eps-prox approx}$t'=\phi(s,t)$ for $s\in S$ and $t\in T$.}
  \end{figure}

  The following hold:
  \begin{itemize}
  
      \item $\phi$ is continuous and equivariant. So $T'=\phi(S\times T)$ is a shift.

      \item Since strong irreducibility is closed under taking products and factors, we see that $T'$ is strongly irreducible.
      
      \item Let $t'_1 = \phi(s_1, t_1), t'_2 = \phi(s_2, t_2) \in T'$. Since $S$ is a $(VU^2X)(VU^2X)^{-1}$-witness shift, there is a $g\in G$ with $s_1(g) = s_2(g) = 1$. So, $t'_1|_{gV}$ and $t'_2|_{gV}$ are both translates of $u|_V$, which means $(g^{-1} t'_1)|_V = (g^{-1} t'_2)|_V$. So from the definition of $V$, we get $d(g^{-1}t'_1, g^{-1}t'_2) < \epsilon$. Hence $T'$ is $\epsilon$-proximal.
      
      \item Now we claim that the set of $X$-patterns of $T'$ and $T$ are equal.
      
      First note that since $u|_V$ has all the $X$-patterns in $T$, and $u|_V$ appears in $T'$, we get that all the $X$-patterns of $T$ appear in $T'$.
      
      Now let $t'=\phi(s,t)\in T'$ and fix an $X$-pattern in $t'$, located at $gX$. If $gX$ does not meet any $k (VU^2)$ for $s(k)=1$, then $t'|_{g X} = t|_{g X}$ and so the pattern appears in $T$. If, on the other hand, $g X$ intersects $k (VU^2)$ for some $k$ with $s(k)=1$ (note that there is at most one such $k$), we have $gX\subseteq k(VU^2X^2)$, and by the definition of $t'$ around $k$, we again see that the pattern in $g X$ appears in $T$.
      
      \item To see $\eps$-minimality of $T'$, let $t'_1,t'_2\in T'$. We know that $t'_2|_X$ is one of the $X$-patterns in $T$, so it appears somewhere in $t'_1$, i.e.\ there exists a $g\in G$ such that $(g t'_1)|_X$ agrees with $t'_2|_X$. We assumed that any two configurations that agree on $X$ have distance less than $\eps$. So, $d(g t'_1, t'_2)<\eps$.

  \end{itemize}
  
This concludes the proof of Proposition \ref{prop:eps-proximal approx}.  

\subsection{Proof of Claim~\ref{clm:G-delta}}
 First we prove proximal shifts are a $G_\delta$. Given a shift $S$, an $\eps>0$ and a $g \in G$, let $P_g \subset S \times S$ be the set of pairs of configurations $s_1,s_2$ such that $d(g s_1,g s_2)<\eps$. Since $P_g$ is the preimage of an open set under a continuous map $P_g$ is open. Thus, whenever $S$ is $\eps$-proximal the collection $\{P_g\,:\, g \in G\}$ forms an open cover of $S \times S$ and thus, by compactness, whenever a shift is $\eps$-proximal there is a finite subset $X \subset G$ which suffice to demonstrate this. For each $X \subset G$, whether  $X$ demonstrates $\eps$-proximality is determined by the restriction of $S$ to a finite set of elements of $G$. But this is exactly the definition of a clopen set in the topology on the space of shifts. Thus the set of $\eps$-proximal shifts is the union of a collection of clopen sets and is therefore open. 
 
 Now we prove minimal shifts are a $G_\delta$. To do this, since minimal shifts are exactly the shifts that are $\eps$-minimal for all $\eps>0$, it is enough to show that the set of $\eps$-minimal shifts is open. Note that by compactness of $X$, a shift is $\eps$-minimal iff its $\eps$-minimality is demonstrated by a finite set. Thus $\eps$-minimality is determined by the set of $Z$-patterns for a finite large enough $Z\subseteq G$. So, as above, the set of $\eps$-minimal shifts is a union of clopen sets, so it is open.

\subsection{Proof of Proposition~\ref{prop:faithful}}
 By Proposition~\ref{prop:dense-G_delta} the minimal proximal shifts are a dense $G_\delta$ in $\mathcal S$. It thus remains to be shown that faithfulness is also generic. Given an element $g\in G$ call a shift {\em $g$-faithful} if $g$ acts non-trivially on the shift. It is easy to see that $g$-faithfulness is an open condition, and so the intersection over all non-trivial $g \in G$, which is faithfulness of the action of $G$, is a $G_\delta$ set. It remains to show that it is dense. To do this we show that each non-trivial $g \in G$ acts non-trivially on every non-trivial strongly irreducible shift. Suppose $g$ is not the identity and acts trivially on a  shift $S$. Then all conjugates of $g$ also act trivially on $S$, so that $hs=s$ for every $h$ a conjugate of $g$ and $s\in S$. In particular, $s(h^{-1})$ must be the same for every $h$ a conjugate of $g$ and every $s \in S$. Since $g$ has an infinite conjugacy class this holds for infinitely many such $h$. But if $S$ is strongly irreducible and non-trivial, then there is some finite $X \subset G$ such that, if $h \notin X$ then there is an $s \in S$ such that $s(h) \neq s(e)$. Thus $g$ must act non-trivially on every non-trivial strongly irreducible shift, and so we have proved the claim.

%\section{Introduction}
\bibliography{main}
\end{document}